\let\OLDthebibliography\thebibliography
\renewcommand\thebibliography[1]{
  \OLDthebibliography{#1}
  \setlength{\parskip}{1pt}
  \setlength{\itemsep}{0pt plus 0.0ex}
}
\numberwithin{equation}{section}
\newtheorem{theorem}{Theorem}[section]
\newtheorem{lemma}[theorem]{Lemma}
\newtheorem{definition}[theorem]{Definition}
\newtheorem{proposition}[theorem]{Proposition}
\newtheorem{remark}[theorem]{Remark}
\newtheorem{example}[theorem]{Example}
\newcommand{\hs}[1]{{\color{red}#1}}
\begin{document}

\title{Stein's method for asymmetric Laplace approximation
}
\author{Fraser Daly\footnote{Department of Actuarial Mathematics and Statistics, and the Maxwell Institute for Mathematical Sciences, Heriot--Watt University, Edinburgh EH14 4AS. Email: \texttt{F.Daly@hw.ac.uk}},\: Robert E. Gaunt\footnote{Department of Mathematics, The University of Manchester, Manchester M13 9PL, UK. Email: \texttt{robert.gaunt@manchester.ac.uk}; \texttt{heather.sutcliffe@manchester.ac.uk}}\:\, and Heather L. Sutcliff$\mathrm{e}^{\dagger}$}

\date{} 
\maketitle

\vspace{-5mm}

\begin{abstract}
Motivated by its appearance as a limiting distribution for random and non-random sums of independent random variables, in this paper we develop Stein's method for approximation by the asymmetric Laplace distribution. Our results generalise and offer technical refinements on existing results concerning Stein's method for (symmetric) Laplace approximation. We provide general bounds for asymmetric Laplace approximation in the Kolmogorov and Wasserstein distances, and a smooth Wasserstein distance, that involve a distributional transformation that can be viewed as an asymmetric Laplace analogue of the zero bias transformation. As an application, we derive explicit Kolmogorov, Wasserstein and smooth Wasserstein distance bounds for the asymmetric Laplace approximation of geometric random sums, and complement these results by providing explicit bounds for the asymmetric Laplace approximation of a deterministic sum of random variables with a random normalisation sequence. 
\end{abstract}

\noindent{{\bf{Keywords:}}} Stein's method; asymmetric Laplace distribution; asymmetric equilibrium transformation; rate of convergence; 
geometric random sum

\noindent{{{\bf{AMS 2020 Subject Classification:}}} Primary 60F05; 62E17}

\section{Introduction}\label{sec1}



In 1972, Charles Stein introduced a powerful approach to bounding the distance between two probability distributions. His seminal paper \cite{stein}  concerned normal approximation, 
and his method rests on the following characterisation of the standard normal distribution. Define an operator $A$ by $Af(x)=f'(x)-xf(x)$. Then, a real-valued random variable $W$ follows the standard normal distribution if and only if $\mathbb{E}[Af(W)]=0$ for all absolutely continuous functions $f:\mathbb{R}\rightarrow\mathbb{R}$ such that $\mathbb{E}|f'(N)|<\infty$ for $N\sim \mathrm{N}(0,1)$. This \emph{Stein characterisation} of the normal distribution motivates the introduction of the \emph{Stein equation}
$
Af(x)=h(x)-\mathbb{E}[h(N)], 
$
for some test function $h:\mathbb{R}\rightarrow\mathbb{R}$. The function $f_h(x)=-\mathrm{e}^{x^2/2}\int_{-\infty}^x (h(t)-\mathbb{E}[h(t)])\mathrm{e}^{-t^2/2}\,\mathrm{d}t$ solves the Stein equation, and its regularity properties are now very well understood; see, for example, \cite{chen,g25}. One can then bound the distance between the distribution of a given random variable $W$ and the standard normal distribution with
respect to an integral probability metric via the transfer principle
\begin{equation}\label{transf}
d_{\mathcal{H}}(W,N):= \sup_{h\in\mathcal{H}}|\mathbb{E}[h(W)]-\mathbb{E}[h(N)]|= \sup_{h\in\mathcal{H}}|\mathbb{E}[Af_h(W)]|,  
\end{equation}
where $\mathcal{H}$ is some class of test functions; for example, taking $\mathcal{H}_{\mathrm{K}}=\{\mathbb{I}_{\cdot\leq z}\,|\,z\in\mathbb{R}\}$, $\mathcal{H}_{\mathrm{W}}=\{h:\mathbb{R}\rightarrow\mathbb{R}\,|\,\text{$h$ is Lipschitz, $\|h'\|\leq1$}\}$ and $\mathcal{H}_{\mathrm{bW}}=\mathcal{H}_{\mathrm{W}}\cap\{h:\mathbb{R}\rightarrow\mathbb{R}\,|\,\text{$\|h\|\leq1$}\}$ induces the Kolmogorov, Wasserstein and bounded Wasserstein distances, which we will denote by $d_{\mathrm{K}}$, $d_\mathrm{W}$ and $d_{\mathrm{bW}}$, respectively. Here and throughout this paper, $\|g\|$ will denote the essential supremum norm on $\mathbb{R}$ of a function $g:\mathbb{R}\rightarrow\mathbb{R}$, so that if $g$ is a Lipschitz function then $\|g'\|$ is its minimum Lipschitz constant. 
In this paper, we will also employ the  Wasserstein-type distances $d_2$ and $d_{1,2}$ (sometimes referred to as smooth Wasserstein distances), which are induced by the function classes $\mathcal{H}_{2}=\{h:\mathbb{R}\rightarrow\mathbb{R}\,|\,h:\mathbb{R}\rightarrow\mathbb{R}\,|\,\text{$h'$ is Lipschitz, $\|h''\|\leq1$}\}$ and $\mathcal{H}_{1,2}=\mathcal{H}_{\mathrm{W}}\cap \mathcal{H}_{2}$, respectively.

The success of Stein's method rests on the fact that the right-hand side of (\ref{transf}) involves a single random variable $W$ and it is typically easier to bound this expectation than to directly bound the integral probability metric $d_{\mathcal{H}}(W,N)$. For introductions to Stein's method and its numerous applications across the mathematical sciences we refer to the monographs \cite{chen,np12} and the survey \cite{ross}. A further advantage of Stein's method is that it applies to many target distributions beyond the normal; indeed, over the years Stein's method has been adapted to many distributions from the classical Poisson \cite{c75}, exponential \cite{chatterjee,pekoz1}, chi-square \cite{gaunt chi square} and beta \cite{d15,gr13} distributions to more exotic distributions like the variance-gamma \cite{gaunt vg} and stable distributions \cite{ah19,xu19}.

In this paper, we contribute to the extensive literature concerning the extension of Stein's method to new distributions by developing Stein's method for asymmetric Laplace approximation. The asymmetric Laplace distribution with parameters $a\in\mathbb{R}$, $b>0$ and $\mu\in\mathbb{R}$, which we denote by $\mathrm{AL}(\mu,a,b)$, is often defined via its characteristic function
\begin{equation}\label{cf1}
	\phi_W(t)=\mathbb{E}[\mathrm{e}^{\mathrm{i}tW}]=\frac{\mathrm{e}^{\mathrm{i}\mu t}}{1+b^2t^2/2-\mathrm{i}at}, \quad t\in\mathbb{R}.
\end{equation}
In this parametrisation, $\mu$ is a location parameter, $b$ is a scale parameter and $a$ is an asymmetry parameter; the distribution is symmetric about the location parameter $\mu$ when $a=0$. The probability density function of $W\sim \mathrm{AL}(\mu,a,b)$ is given by
\begin{equation}\label{alpdf}
	f_W(x)=\frac{1}{\sqrt{2b^2+a^2}}\exp\bigg(\frac{a}{b^2}(x-\mu)-\frac{1}{b}\sqrt{2+\frac{a^2}{b^2}}|x-\mu|\bigg), \quad x\in\mathbb{R},
\end{equation}
whilst the mean and variance are given by
\begin{equation*}
\mathbb{E}[W]=a+\mu, \quad \mathrm{Var}(W)=b^2+(a+\mu)^2.    
\end{equation*}
The asymmetric Laplace distribution can be represented as a normal variance-mean mixture. Let $X$ and $N$ be independent random variables following the exponential distribution with unit rate parameter and the standard normal distribution, respectively. Then
\begin{equation}
\mu+aX+b\sqrt{X}N\sim \mathrm{AL}(\mu,a,b).   \label{nvm} 
\end{equation}
These and further properties, along with accounts of the numerous applications of the asymmetric Laplace distribution, are given in the monograph \cite{kkp01}.

One of the most significant applications of the asymmetric Laplace distribution is to financial modelling; see, for example, \cite{kp01,kr94,t12} and \cite[Chapter 8]{kkp01}. In financial applications, the asymmetric Laplace distribution often arises in the following manner \cite[Section 8.3]{kkp01}. Following \cite{mr93}, one may consider a stock price change as a sum of a large number of small changes, in which the number of small changes is taken up to a random time $N_p$. Informally stated, 
\begin{equation*}
\text{stock price change}=\sum_{i=1}^{N_p}(\text{small changes}) .  
\end{equation*}
The random time $N_p$ represents the time at which the probabilistic structure describing the stock price breaks down (for example, due to a change in the fundamentals of the underlying asset). Taking $N_p$ to be geometrically distributed (so that $p$ denotes the small probability of a dramatic change in a market) motivates modelling stock price changes via an asymmetric Laplace distribution (see, for example, \cite[Proposition 3.4.4]{kkp01}), which has been found empirically to be a good fit to real financial data; see, for example, \cite{kr94}. 

Motivated by the appearance of the asymmetric Laplace distribution in financal modelling, \cite{toda} derived a Lindeberg-type condition under which a suitably normalised geometric random sum of independent random variables converges in distribution to the asymmetric Laplace law. Shortly afterwards, \cite{pike} 
developed Stein's method for (symmetric) Laplace approximation (the $a=0$ case) and applied this theory to derive an explicit bounded Wasserstein distance bound to quantify the Laplace approximation of a geometric random sum of independent zero mean random variables. Technical refinements to the treatment of \cite{pike} were subsequently made independently by \cite{g21,s21} that allowed for bounds to be stated with respect to stronger probability metrics. Further works have also given bounds to quantify this distributional approximation \cite{bu24,d15,g20,ks12,s18}. However, to the best of our knowledge, bounds that quantify the more general asymmetric Laplace approximation of a geometric random sum of independent random variables with possibly non-zero means have yet to be given (note that in the context of financial modelling the `small random changes' in a stock price will typically have a small non-zero mean). 

Our development of Stein's method for asymmetric Laplace approximation is a natural extension of the papers \cite{g21,pike,s21}.
The asymmetric Laplace distribution is one of the most important classical distributions for which Stein's method is yet to be developed, and arises as a limiting distribution for both geometric random sums and sums of independent random variables with random standardisation (see Sections \ref{sec4.1} and \ref{sec4.2}, respectively). Our work fills this gap in the Stein's method literature. 
It should be remarked that several works have developed Stein's method for classes of distributions that include the asymmetric Laplace distribution as special cases, such as the variance-gamma \cite{gaunt vg} and infinitely divisible distributions \cite{ah19}. However, as is to be expected, the specialist treatment provided in this paper can lead to stronger results in the asymmetric Laplace setting; see Remark \ref{remvg} for a discussion. 

In Section \ref{sec2}, we obtain a Stein characterisation of the asymmetric Laplace distribution (Lemma \ref{lem2.1}), from which we are able to write down a Stein equation for the asymmetric Laplace distribution. Regularity estimates for the solution of the Stein equation are given in Lemma \ref{lem2.2}. 

In Section \ref{sec3}, we introduce a distributional transformation that is well suited to asymmetric Laplace approximation via Stein's method. This distributional transformation, which we refer to as the asymmetric equilibrium transformation (Definition \ref{def1}), is a natural generalisation of the centered equilibrium transformation introduced by \cite{pike} for Laplace approximation, and can also be viewed as an asymmetric Laplace analogue of the zero bias transformation that was introduced by \cite{gr97} for normal approximation. We establish some basic properties of the asymmetric equilibrium transformation in Proposition \ref{prop3.3} and provide general plug-in bounds for asymmetric Laplace approximation, expressed in terms of this distributional transformation, in Theorem \ref{thm3.5}.

Section \ref{sec4} is devoted to applications of the preceding work, beginning with an
application to perturbations of the asymmetric Laplace distribution in Section \ref{sec4.0}. In Section \ref{sec4.1}, we apply our general bounds of Theorem \ref{thm3.5} to derive explicit bounds for the asymmetric Laplace approximation of a geometric random sum of independent random variables.
In Section \ref{sec4.2}, we complement these bounds by deriving explicit bounds for the asymmetric Laplace approximation of a deterministic sum of random variables with a random normalisation sequence. 





\begin{remark}
During the course of this research, the second author was informed by Nathan Ross that his MSc student, Xin Huang, had independently written a dissertation on Stein's method and the asymmetric Laplace distribution \cite{huang}, developing the basic ingredients required to set up Stein's method for asymmetric Laplace approximation with an application to geometric random sums. In particular, the dissertation proves Lemma \ref{lem2.1} with different conditions, provides some bounds for the solution of the Stein equation (with larger constants and stronger assumptions on the test function $h$ than those given in Lemma \ref{lem2.2}), gives a proof of the construction of the asymmetric equilibrium transformation as in part (ii) of Proposition \ref{prop3.3} under an additional assumption, and gives an analogue of the Wasserstein distance bound of Theorem \ref{thm4.1} in the weaker bounded Wasserstein distance under stronger assumptions. Since the dissertation \cite{huang} is not publicly available and all our corresponding results are stronger in some sense, we provide full independent proofs of all these results.
\end{remark}

\section{A Stein equation for the asymmetric Laplace distribution}\label{sec2}

We begin by establishing a Stein characterisation of the asymmetric Laplace distribution.

\begin{lemma}\label{lem2.1}Let $W$ be a real-valued random variable. Then $W$ follows the $\mathrm{AL}(\mu,a,b)$ distribution if and only if
	\begin{equation}\label{alapchar} \mathbb{E}\bigg[\frac{b^2}{2}f''(W)+af'(W)-\big(f(W)-f(\mu)\big)\bigg]=0
	\end{equation}
	for all functions $f:\mathbb{R}\rightarrow\mathbb{R}$ such that $f$ and $f'$ are locally absolutely continuous with $\mathbb{E}|f'(Z)|<\infty$ and $\mathbb{E}|f''(Z)|<\infty$, for $Z\sim\mathrm{AL}(\mu,a,b)$.
\end{lemma}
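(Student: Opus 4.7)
The plan is to prove the two directions separately, with necessity handled by integration by parts against the explicit density (\ref{alpdf}), and sufficiency by testing against sinusoidal $f$ to recover the characteristic function (\ref{cf1}).

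For necessity, I would assume $W \sim \mathrm{AL}(\mu, a, b)$ and set $\alpha = a/b^2$, $\beta = \frac{1}{b}\sqrt{2+a^2/b^2}$, $C = 1/\sqrt{2b^2+a^2}$, so that (\ref{alpdf}) is piecewise $f_W(x) = Ce^{(\alpha-\beta)(x-\mu)}$ on $(\mu, \infty)$ and $f_W(x) = Ce^{(\alpha+\beta)(x-\mu)}$ on $(-\infty, \mu)$. A direct computation verifies $\frac{b^2}{2}(\alpha\pm\beta)^2 - a(\alpha\pm\beta) - 1 = 0$, so the formal adjoint $A^* g := \frac{b^2}{2}g'' - ag' - g$ annihilates $f_W$ away from $\mu$. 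Splitting $\int_{\mathbb{R}} [\frac{b^2}{2}f'' + af' - f + f(\mu)] f_W(x)\,\mathrm{d}x$ at $x = \mu$ and integrating by parts twice on each side moves the derivatives onto $f_W$. The jump $f_W'(\mu^+) - f_W'(\mu^-) = -2\beta C$ combined with the identity $b^2 \beta C = 1$ makes the boundary contribution at $\mu$ equal to $-f(\mu)$, cancelling the $f(\mu)$ term built into the operator, while the interior integral vanishes by the adjoint identity, giving $\mathbb{E}[Af(W)] = 0$.

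For sufficiency, I would apply the hypothesis to $f_1(x) = \cos(tx)$ and $f_2(x) = \sin(tx)$ for each $t \in \mathbb{R}$; these have $|f_j|, |f_j'|, |f_j''|$ uniformly bounded on $\mathbb{R}$, so every integrability requirement holds trivially. Combining the two via $f_1 + \mathrm{i}f_2 = e^{\mathrm{i}tx}$ yields
\begin{equation*}
\phi_W(t)\bigl(1 + \tfrac{b^2 t^2}{2} - \mathrm{i}at\bigr) = e^{\mathrm{i}\mu t},
\end{equation*}
which matches (\ref{cf1}), so uniqueness of characteristic functions identifies $W \sim \mathrm{AL}(\mu, a, b)$.

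The main technical obstacle is justifying the vanishing of the boundary terms at $\pm\infty$ in the necessity step from the stated hypotheses $\mathbb{E}|f'(Z)|, \mathbb{E}|f''(Z)| < \infty$. On $(\mu, \infty)$, where $f_W' = -(\beta - \alpha)f_W$, writing $f(x) = f(\mu) + \int_\mu^x f'(t)\,\mathrm{d}t$ and applying Fubini with the tail identity $\int_t^\infty f_W(s)\,\mathrm{d}s = f_W(t)/(\beta - \alpha)$ bounds $\int_\mu^\infty |f(x)| f_W(x)\,\mathrm{d}x \le |f(\mu)| + \mathbb{E}|f'(Z)|/(\beta - \alpha) < \infty$. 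Hence $f(x)f_W(x)$ is itself integrable on $(\mu, \infty)$ with integrable derivative, so it admits a limit at $+\infty$ which must be zero. The same argument applied to $f'(x)f_W(x)$ uses $\mathbb{E}|f''(Z)|<\infty$, and the corresponding limits at $-\infty$ are handled symmetrically.
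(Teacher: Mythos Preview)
Your sufficiency argument is essentially identical to the paper's: both test against the real and imaginary parts of $e^{\mathrm{i}tx}$ to recover the characteristic function (\ref{cf1}) and conclude by uniqueness.

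For necessity, your proof is correct but organised differently from the paper's. You integrate by parts twice against the density, exploit the fact that the formal adjoint $\frac{b^2}{2}g''-ag'-g$ annihilates $f_W$ on each half-line, and identify the boundary contribution at $\mu$ with the $f(\mu)$ term; you then need a separate argument (which you supply correctly via the tail identity and Fubini) to show that $f\cdot f_W$ and $f'\cdot f_W$ vanish at $\pm\infty$. The paper instead computes $\int_0^\infty f'(x)e^{-(\alpha-\beta)x}\,\mathrm{d}x$ and its $f''$ analogue directly by writing the exponential weight as $\int_x^\infty(\alpha-\beta)e^{-(\alpha-\beta)y}\,\mathrm{d}y$ and swapping the order of integration; this is integration by parts in disguise, but packaged so that no boundary terms at infinity ever appear and the hypotheses $\mathbb{E}|f'(Z)|,\mathbb{E}|f''(Z)|<\infty$ feed straight into the Fubini justification. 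Your adjoint viewpoint is conceptually cleaner and explains \emph{why} the operator works, at the cost of the extra boundary-decay step; the paper's route is shorter and sidesteps that issue entirely. (A minor point: your $\alpha,\beta$ are interchanged relative to the paper's, but this is purely notational.)
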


Our Stein characterisation leads to a Stein equation for the asymmetric Laplace distribution, which we state as equation (\ref{ivp}) in the following lemma. The lemma provides a solution to the Stein equation together with regularity estimates for the solution. In this lemma and henceforth, we will without loss of generality set $\mu=0$.

\begin{lemma}\label{lem2.2}Let $h:\mathbb{R}\rightarrow\mathbb{R}$ be a measurable function such that $\mathbb{E}|h(Z)|<\infty$, for $Z\sim\mathrm{AL}(0,a,b)$.  Then a solution to the initial value problem 
	\begin{equation}\label{ivp}\frac{b^2}{2}f''(x)+ af'(x)-f(x)=h(x)-\mathbb{E}[h(Z)], \quad f(0)=0
	\end{equation}
	is given by
	\begin{align}f_h(x)=-\frac{1}{\sqrt{2b^2+a^2}}\bigg(&\mathrm{e}^{\frac{1}{b}(\sqrt{2+a^2/b^2}-a/b)x}\int_x^\infty \mathrm{e}^{-\frac{1}{b}(\sqrt{2+a^2/b^2}-a/b)t}\tilde{h}(t)\,\mathrm{d}t\nonumber \\
		\label{sol}&+\mathrm{e}^{-\frac{1}{b}(\sqrt{2+a^2/b^2}+a/b)x}\int_{-\infty}^x \mathrm{e}^{\frac{1}{b}(\sqrt{2+a^2/b^2}+a/b)t}\tilde{h}(t)\,\mathrm{d}t\bigg),
	\end{align}
where $\tilde{h}(x)=h(x)-\mathbb{E}[h(Z)]$.    
	If $h$ is bounded, then $f_h$ is the unique bounded solution to (\ref{ivp}),
    and the following bounds hold:
	\begin{equation}\|f_h\|\leq\|\tilde{h}\|, \quad \|f_h'\|\leq\frac{2}{\sqrt{2b^2+a^2}}\|\tilde{h}\|, \quad \|f_h''\|\leq\frac{4}{b^2}\|\tilde{h}\|. \label{kolstein}
	\end{equation}
Let $k\geq0$ and now suppose that $h^{(k)}$ is Lipschitz. Then, 
	\begin{equation}\label{2.6}\|f_h^{(k+1)}\|\leq\|h^{(k+1)}\|, \quad \|f_h^{(k+2)}\|\leq\frac{2}{\sqrt{2b^2+a^2}}\|h^{(k+1)}\|, \quad \|f_h^{(k+3)}\|\leq\frac{4}{b^2}\|h^{(k+1)}\|,
	\end{equation}
	where $h^{(0)}\equiv h$.
\end{lemma}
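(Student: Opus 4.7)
The plan has four steps. First, verify by direct differentiation that $f_h$ defined in (\ref{sol}) solves the ODE in (\ref{ivp}). Setting $\alpha = \frac{1}{b}(\sqrt{2+a^2/b^2} - a/b)$ and $\beta = \frac{1}{b}(\sqrt{2+a^2/b^2} + a/b)$, one checks that $\alpha$ and $-\beta$ are the roots of the characteristic polynomial $\frac{b^2}{2}r^2+ar-1 = 0$, with the useful identities $\alpha+\beta = \frac{2\sqrt{2b^2+a^2}}{b^2}$ and $\alpha\beta = \frac{2}{b^2}$. When (\ref{sol}) is differentiated once the endpoint contributions at $t=x$ cancel, and a second differentiation produces an extra $(\alpha+\beta)\tilde h(x)$ term; substituting into $\frac{b^2}{2}f_h''+af_h'-f_h$ reproduces exactly $\tilde h(x)$ after the characteristic-root identities are applied. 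The initial condition $f_h(0)=0$ is then immediate from the form of the AL density (\ref{alpdf}), whose pieces on $(0,\infty)$ and $(-\infty,0)$ are $\tfrac{1}{\sqrt{2b^2+a^2}}e^{-\alpha t}$ and $\tfrac{1}{\sqrt{2b^2+a^2}}e^{\beta t}$ respectively, so that evaluating (\ref{sol}) at $x=0$ gives $-\mathbb{E}[\tilde h(Z)]=0$.

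Second, when $h$ is bounded, uniqueness of the bounded solution follows because the kernel of the differential operator is spanned by $e^{\alpha x}$ and $e^{-\beta x}$, both unbounded on $\mathbb{R}$, so two bounded solutions must coincide.

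Third, the bounds (\ref{kolstein}). Pulling $\|\tilde h\|$ outside both integrals in (\ref{sol}) gives $\|f_h\| \leq \frac{1}{\sqrt{2b^2+a^2}}(\frac{1}{\alpha}+\frac{1}{\beta})\|\tilde h\| = \|\tilde h\|$, using $\frac{1}{\alpha}+\frac{1}{\beta} = \frac{\alpha+\beta}{\alpha\beta} = \sqrt{2b^2+a^2}$. The analogous estimate applied to the explicit formulas for $f_h'$ and $f_h''$ obtained in step one yields the constants $\frac{2}{\sqrt{2b^2+a^2}}$ and $\frac{4}{b^2}$, with the latter picking up the extra factor $\alpha+\beta$ from the additional $\tilde h(x)$-boundary term that appears on the second differentiation. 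These estimates use only the sup-norm of the integrand, not the mean-zero property, so the same three constants bound the sup-norms of the first three derivatives of any function defined by the formula (\ref{sol}) with $\tilde h$ replaced by an arbitrary bounded function.

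Fourth, the Lipschitz bounds (\ref{2.6}). For $k=0$, in the formula for $f_h'(x)$ derived in step one apply integration by parts to each of the two integrals, using that $|\tilde h(t)|$ grows at most linearly while $e^{-\alpha t}$ and $e^{\beta t}$ decay exponentially at the relevant ends so the boundary contributions at $\pm\infty$ vanish; the $t=x$ boundary contributions in the two integrals appear with opposite signs and cancel, leaving precisely the formula (\ref{sol}) with $\tilde h$ replaced by $h'$. Applying the three step-three bounds in their general form to this representation gives (\ref{2.6}) for $k=0$. The general $k \geq 0$ case follows by iterating the integration-by-parts step $k+1$ times to express $f_h^{(k+1)}$ as the formula (\ref{sol}) with integrand $h^{(k+1)}$, and then invoking step three. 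The main obstacle is the bookkeeping in this iteration: at every level one must check that the $t=x$ boundary contributions cancel in pairs and that the $\pm\infty$ contributions are killed by the exponential decay, so that a clean copy of (\ref{sol}) is reproduced and the induction goes through; this is ultimately enabled by the specific choice of the exponents $\alpha$ and $-\beta$ as the characteristic roots.
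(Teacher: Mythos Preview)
Your proposal is correct and follows essentially the same route as the paper's proof: the paper also reduces to the characteristic roots (though with the alternative parametrisation $\alpha=b^{-1}\sqrt{2+a^2/b^2}$, $\beta=a/b^2$, so that your $\alpha,\beta$ are its $\alpha-\beta,\alpha+\beta$), verifies $f_h(0)=0$ via the AL density, bounds $f_h,f_h',f_h''$ by pulling $\|\tilde h\|$ out of the integrals, and obtains the Lipschitz bounds by the same integration-by-parts observation that $f_h'$ reproduces the formula (\ref{sol}) with $h'$ in place of $\tilde h$, iterated for higher $k$. The only cosmetic difference is that the paper \emph{derives} the solution via variation of parameters whereas you \emph{verify} it by direct differentiation; and for $\|f_h''\|$ the paper substitutes the explicit formulas for $f_h,f_h'$ into the rearranged ODE rather than differentiating twice, but this leads to exactly the same estimate.
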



\begin{remark}\label{remvg} Taking $f(x)=(x-\mu)g(x)$ in the initial value problem (\ref{ivp}) leads to the following alternative Stein equation for the $\mathrm{AL}(\mu,a,b)$ distribution:
\begin{equation}\label{alst2}
 \frac{b^2}{2}(x-\mu)g''(x)+(b^2+a(x-\mu))g'(x)+(a-(x-\mu))g(x)=h(x)-\mathbb{E}[h(Z)].  
\end{equation}
The Stein equation (\ref{alst2}) is a special case of the variance-gamma Stein equation of \cite{gaunt vg}. The variance-gamma Stein equation of \cite{gaunt vg} has been used to obtain quantitative six moment theorems for the variance-gamma approximation of double Wiener–It\^o integrals \cite{aet22,et15,g22}, which as a special case provide quantitative asymmetric Laplace approximations of double Wiener–It\^o integrals. However, the Stein equation (\ref{alst2}) can be difficult to work with. Indeed, the problem of obtaining suitable regularity estimates for the solution of the variance-gamma Stein equation was rather involved and spanned four papers \cite{dgv15,gaunt vg,g20,g22}, whilst in Lemma \ref{lem2.2} we are able to provide an efficient proof of accurate bounds (with good dependence on the parameters $a$ and $b$ and the test function $h$) for the solution of the asymmetric Laplace Stein equation (\ref{ivp}). In principle, it would be possible to develop the theory along the lines of the forthcoming Section \ref{sec3} and prove the applications of Theorems \ref{thm:perturb} and \ref{thm4.1} using the Stein equation (\ref{alst2}), but the treatment would be more involved and lead to worse constants, and the approach may lead to bounds that are given in weaker probability metrics. 

We also remark that \cite{bu24} recently obtained an alternative Stein characterisation of the (symmetric) Laplace distribution as a special case of a Stein characterisation of \cite[Theorem 3.1]{ah19} for infinitely divisible distributions. The Stein characterisation of \cite{bu24} leads to an integral Stein equation for the Laplace distribution, and they applied their Stein equation to the geometric random sum example we treat in Section \ref{sec4.1} in the case of symmetric Laplace approximation. However, by working with their integral Stein operator, they can only give an (optimal order) bound with respect to a strictly weaker probability metric than those that we work with in Theorem \ref{thm4.1}. Therefore working with a generalisation of the Stein characterisation of \cite{bu24} and following their approach would lead to weaker results than we are able to achieve through our approach that rests on the Stein characterisation given in Lemma \ref{lem2.1}.
\end{remark}


\noindent{\emph{Proof of Lemma \ref{lem2.1}}.}  \emph{Necessity.} We prove the lemma for the case $\mu=0$; the general case $\mu\in\mathbb{R}$ follows from a simple translation. It will be notationally convenient to let $\alpha=b^{-1}\sqrt{2+a^2/b^2}$ and $\beta=a/b^2$. With this change of variables, we are required to prove that $\mathbb{E}[f''(W)+2\beta f'(W)-(\alpha^2-\beta^2)(f(W)-f(0))]=0$ for $W\sim\mathrm{AL}(0,a,b)$. We note that in this parameterisation the probability density function of the $\mathrm{AL}(0,a,b)$ distribution is given by $(2\alpha)^{-1}(\alpha^2-\beta^2)\mathrm{e}^{\beta x-\alpha|x|}$, for $x\in\mathbb{R}$. By applying Fubini's theorem, we have that 
\begin{align*}
	\int_{0}^{\infty}f'(x)\mathrm{e}^{-(\alpha-\beta)x}\,\mathrm{d}x&=\int_{0}^{\infty}f'(x)\bigg(\int_{x}^{\infty}(\alpha-\beta)\mathrm{e}^{-(\alpha-\beta)y}\,\mathrm{d}y\bigg)\,\mathrm{d}x\\
	&=(\alpha-\beta)\int_{0}^{\infty}\int_{0}^{y}f'(x)\mathrm{e}^{-(\alpha-\beta)y}\,\mathrm{d}x\,\mathrm{d}y\\
	&=(\alpha-\beta)\int_{0}^{\infty}f(y)\mathrm{e}^{-(\alpha-\beta)y}\,\mathrm{d}y-f(0), 
\end{align*}
from which we then readily obtain that
\begin{align*}
	\int_{0}^{\infty}f''(x)\mathrm{e}^{-(\alpha-\beta)x}\,\mathrm{d}x=(\alpha-\beta)^2\int_{0}^{\infty}f(y)\mathrm{e}^{-(\alpha-\beta) y}\,\mathrm{d}y-(\alpha-\beta)f(0)-f'(0).
\end{align*}
By similar calculations we obtain the expressions
\begin{align*}
\int_{-\infty}^{0}f'(x)\mathrm{e}^{(\alpha+\beta)x}\,\mathrm{d}x&=-(\alpha+\beta)\int_{-\infty}^{0}f(y)\mathrm{e}^{(\alpha+\beta)y}\,\mathrm{d}y+f(0), \\
\int_{-\infty}^{0}f''(x)\mathrm{e}^{(\alpha+\beta)x}\,\mathrm{d}x&=(\alpha+\beta)^2\int_{-\infty}^{0}f(y)\mathrm{e}^{(\alpha+\beta)y}\,\mathrm{d}y-(\alpha+\beta)f(0)+f'(0).    
\end{align*}
On combining the above expressions and simplifying we obtain that
\begin{align*}
	&\mathbb{E}[f''(W)]+2\beta\mathbb{E}[f'(W)]\\
	&\quad=(\alpha^2-\beta^2)\bigg\{\frac{\alpha^2-\beta^2}{2\alpha}\int_{0}^{\infty}f(y)\mathrm{e}^{-(\alpha-\beta)y}\,\mathrm{d}y+\frac{\alpha^2-\beta^2}{2\alpha}\int_{-\infty}^{0}f(y)\mathrm{e}^{(\alpha+\beta)y}\,\mathrm{d}y-f(0)\bigg\}\\
	&\quad=(\alpha^2-\beta^2)\big(\mathbb{E}[f(W)]-f(0)\big),
\end{align*}
as required.

\vspace{3mm}

\noindent \emph{Sufficiency.} Let $W$ be a real-valued random variable satisfying \eqref{alapchar}, and denote its characteristic function by $\phi_W(t)=\mathbb{E}[\mathrm{e}^{\mathrm{i}tW}]$. Applying the Stein characterisation (\ref{alapchar}) to the real and imaginary parts of the function $f(x)=\mathrm{e}^{\mathrm{i}tx}$ (noting that $f$ and $f'$ are locally absolutely continuous with $\mathbb{E}|f'(Z)|<\infty$ and $\mathbb{E}|f''(Z)|<\infty$, for $Z\sim\mathrm{AL}(0,a,b)$) now reveals that $\phi_W(t)$ satisfies the equation
\begin{equation*}
	\bigg(-\frac{b^2}{2}t^2+a\mathrm{i}t-1\bigg)\phi_W(t)+1=0,
\end{equation*}
which, upon rearranging, has solution
	$\phi_W(t)=(1-a\mathrm{i}t+b^2t^2/2)^{-1}$.
This is the characteristic function of the $\mathrm{AL}(0,a,b)$ distribution (\ref{cf1}), and it thus follows from the uniqueness of characteristic functions that $W\sim\mathrm{AL}(0,a,b)$.\qed

\vspace{3mm}

\noindent{\emph{Proof of Lemma \ref{lem2.2}}.} It will again be notationally convenient to let $\alpha=b^{-1}\sqrt{2+a^2/b^2}$ and $\beta=a/b^2$, so that the initial value problem (\ref{ivp}) becomes
\begin{equation}\label{ivp2}f''(x)+2\beta f'(x)-(\alpha^2-\beta^2)f(x)=\frac{2}{b^2}\tilde{h}(x), \quad f(0)=0.
\end{equation}

Firstly, we prove that there is at most one bounded solution to the initial value problem (\ref{ivp2}).  Suppose $u$ and $v$ are solutions to (\ref{ivp2}).  Then $w=u-v$ satisfies the initial value problem
\begin{equation}\label{ivp3}w''(x)+2\beta w'(x)-(\alpha^2-\beta^2)w(x)=0, \quad w(0)=0.
\end{equation}
The initial value problem (\ref{ivp3}) has family of solutions $w(x)=A(\mathrm{e}^{(\alpha-\beta)x}-\mathrm{e}^{-(\alpha+\beta)x})$, where $A\in\mathbb{R}$ is a constant.  To ensure that $w$ is bounded we must take $A=0$, and so $w=0$, meaning that there is at most one bounded solution to (\ref{ivp2}). 

Now, since (\ref{ivp2}) is an inhomogeneous linear ordinary differential equation, we can use the method of variation of parameters (see \cite{collins} for an account of the method) to write down the general solution:
\begin{equation}\label{ffviv}f(x)=\frac{2}{b^2}\bigg(-w_1(x)\int_c^x\frac{w_2(t)\tilde{h}(t)}{W(t)}\,\mathrm{d}t+w_2(x)\int_d^x\frac{w_1(t)\tilde{h}(t)}{W(t)}\,\mathrm{d}t\bigg),
\end{equation}
where $w_1$ and $w_2$ are linearly independent solutions to the homogeneous equation (that is the differential equation in (\ref{ivp2}) with the RHS set to zero), $c$ and $d$ are arbitrary constants (which must be such that the condition $f(0)=0$ is satisfied), and $W(t)=w_1(t)w_2'(t)-w_2(t)w_1'(t)$ is the Wronskian.  We find that $w_1(x)=\mathrm{e}^{(\alpha-\beta)x}$ and $w_2(x)=\mathrm{e}^{-(\alpha+\beta)x}$ are linearly independent solutions to the homogeneous equation, and $W(t)=-2\alpha\mathrm{e}^{-2\beta t}$. Therefore, on substituting these quantities into (\ref{ffviv}) and taking $c=\infty$ and $d=-\infty$, we obtain the solution
\begin{equation}\label{vargensoln}f_h(x)=-\frac{1}{\alpha b^2}\bigg(\mathrm{e}^{(\alpha-\beta)x}\int_x^\infty\mathrm{e}^{-(\alpha-\beta)t}\tilde{h}(t)\,\mathrm{d}t+\mathrm{e}^{-(\alpha+\beta)x}\int_{-\infty}^x\mathrm{e}^{(\alpha+\beta)t}\tilde{h}(t)\,\mathrm{d}t\bigg),
\end{equation}
which proves that (\ref{sol}) solves the differential equation in (\ref{ivp}).  To see that the initial condition is satisfied, we note that
\begin{align*}f_h(0)=-\frac{1}{\sqrt{2b^2+a^2}}\int_{-\infty}^\infty\mathrm{e}^{\frac{1}{b}(at/b-\sqrt{2+a^2/b^2}|t|)}\tilde{h}(t)\,\mathrm{d}t=0,
\end{align*} 
where the second equality holds because the integrand is proportional to the $\mathrm{AL}(0,a,b)$ density (\ref{alpdf}).

We now derive the regularity estimates for the solution $f_h$. We continue to work in our $(\alpha,\beta)$ parametrisation. We first suppose that $h$ is bounded.  Then, for $x\in\mathbb{R}$,
\begin{align}\bigg|\mathrm{e}^{(\alpha-\beta)x}\int_x^\infty \mathrm{e}^{-(\alpha-\beta)t}\tilde{h}(t)\,\mathrm{d}t\bigg|&\leq \|\tilde{h}\|\mathrm{e}^{(\alpha-\beta)x}\int_x^\infty \mathrm{e}^{-(\alpha-\beta)t}\,\mathrm{d}t=\frac{\|\tilde{h}\|}{\alpha-\beta},\label{bb1} \\
\bigg|\mathrm{e}^{-(\alpha+\beta)x}\int_{-\infty}^x\mathrm{e}^{(\alpha+\beta)t}\tilde{h}(t)\,\mathrm{d}t\bigg|&\leq \|\tilde{h}\|\mathrm{e}^{-(\alpha+\beta)x}\int_{-\infty}^x \mathrm{e}^{(\alpha+\beta)t}\,\mathrm{d}t=\frac{\|\tilde{h}\|}{\alpha+\beta}. \label{bb2}
\end{align}
By applying these bounds to the solution (\ref{vargensoln}) we obtain the bound
\begin{equation*}\label{fequal}\|f_h\|\leq\frac{\|\tilde{h}\|}{\alpha b^2}\bigg(\frac{1}{\alpha-\beta}+\frac{1}{\alpha+\beta}\bigg)=\frac{\|\tilde{h}\|}{b^2}\frac{2}{(\alpha^2-\beta^2)}=\|\tilde{h}\|.
\end{equation*}
The derivative of $f_h$ is given by
\begin{equation}\label{firstd}f_h'(x)=-\frac{1}{\alpha b^2}\bigg((\alpha-\beta)\mathrm{e}^{(\alpha-\beta)x}\int_x^\infty\mathrm{e}^{-(\alpha-\beta)t}\tilde{h}(t)\,\mathrm{d}t-(\alpha+\beta)\mathrm{e}^{-(\alpha+\beta)x}\int_{-\infty}^x\mathrm{e}^{(\alpha+\beta)t}\tilde{h}(t)\,\mathrm{d}t\bigg),
\end{equation}
and by applying inequalities (\ref{bb1}) and (\ref{bb2}) we obtain the bound
\begin{equation*}\label{fdash}\|f_h'\|\leq\frac{2\|\tilde{h}\|}{\alpha b^2}=\frac{2\|\tilde{h}\|}{\sqrt{2b^2+a^2}}.
\end{equation*}
On rearranging (\ref{ivp2}) and applying formulas (\ref{vargensoln}) and (\ref{firstd}) we obtain that, for any $x\in\mathbb{R}$,
\begin{align*}|f_h''(x)|&=\bigg|\frac{2}{b^2}\tilde{h}(x)+(\alpha^2-\beta^2)f_h(x)-2\beta f_h'(x)\bigg| \\
	&=\bigg|\frac{2}{b^2}\tilde{h}(x)-\frac{1}{\alpha b^2}\bigg((\alpha-\beta)^2\mathrm{e}^{(\alpha-\beta)x}\int_x^\infty\mathrm{e}^{-(\alpha-\beta)t}\tilde{h}(t)\,\mathrm{d}t\\
	&\quad+(\alpha+\beta)^2\mathrm{e}^{-(\alpha+\beta)x}\int_{-\infty}^x\mathrm{e}^{(\alpha+\beta)t}\tilde{h}(t)\,\mathrm{d}t\bigg)\bigg| \\
	&\leq\frac{2}{b^2}\|\tilde{h}\|+\frac{1}{\alpha b^2}\big((\alpha-\beta)\|\tilde{h}\|+(\alpha+\beta)\|\tilde{h}\|\big)=\frac{4}{b^2}\|\tilde{h}\|.
\end{align*}

Suppose now that $h$ is Lipschitz.Manipulating the integrals in (\ref{firstd}) by an application of integration by parts yields the expression
\begin{align*}f_h'(x)&=-\frac{1}{\alpha b^2}\bigg\{(\alpha-\beta)\mathrm{e}^{(\alpha-\beta)x}\bigg(\frac{1}{\alpha-\beta}\mathrm{e}^{-(\alpha-\beta)x}\tilde{h}(x)+\frac{1}{\alpha-\beta}\int_x^\infty\mathrm{e}^{-(\alpha-\beta)t}h'(t)\,\mathrm{d}t\bigg)\\
	&\quad-(\alpha+\beta)\mathrm{e}^{-(\alpha+\beta)x}\bigg(\frac{1}{\alpha+\beta}\mathrm{e}^{(\alpha+\beta)x}\tilde{h}(x)-\frac{1}{\alpha+\beta}\int_{-\infty}^x\mathrm{e}^{(\alpha+\beta)t}h'(t)\,\mathrm{d}t\bigg)\bigg\} \\
	&=-\frac{1}{\alpha b^2}\bigg(\mathrm{e}^{(\alpha-\beta)x}\int_x^\infty\mathrm{e}^{-(\alpha-\beta)t}h'(t)\,\mathrm{d}t+\mathrm{e}^{-(\alpha+\beta)x}\int_{-\infty}^x\mathrm{e}^{(\alpha+\beta)t}h'(t)\,\mathrm{d}t\bigg).
\end{align*}
This representation of the derivative $f_h'(x)$ is the same as the representation (\ref{vargensoln}) of the solution $f_h(x)$, with the sole difference being that $h'(t)$ replaces the role of $\tilde{h}(t)$ in the integrands. We thus immediately arrive at the bounds given in (\ref{2.6}) in the case $k=0$. Iterating this procedure then gives the desired bounds for $\|f_h^{(k+1)}\|$, $\|f_h^{(k+2)}\|$ and $\|f_h^{(k+3)}\|$, for $k\geq1$.
\qed

\section{The asymmetric equilibrium transformation}\label{sec3}

In this section, we define a distributional transformation that will be one of the key tools in the work that follows. Such distributional transformations have played an important role in many different settings in which Stein's method has been employed; see the survey \cite{ross} for a discussion of several of these. The transformation we define here, called the \emph{asymmetric equilibrium transformation}, generalises an earlier definition of \cite{pike} used in their Laplace approximation results. We begin in Section \ref{sec3.1} by stating the definition and giving a number of fundamental properties of the asymmetric equilibrium transformation. This transformation has the asymmetric Laplace distribution as its unique fixed point, and as such many of our approximation results rely on couplings between a given random variable and its asymmetric equilibrium version. In Section \ref{sec3.2}, we explore several settings in which we may explicitly couple these random variables, including the setting of random sums which will be important in the applications that follow. Finally, in Section \ref{sec3.3}, we give general bounds that make use of such coupling constructions in order to prepare for the applications of Section \ref{sec4}.  

\subsection{Definition and useful properties}\label{sec3.1}

\begin{definition}\label{def1}
	Let $W$ be a random variable with mean $a\in\mathbb{R}$ and variance $\sigma^2\in(0,\infty)$ and suppose that $\sigma^2>a^2$. Then we will say that the random variable $W^A$ has the \emph{asymmetric equilibrium distribution} with respect to $W$ if 
	\begin{equation}
		\frac{\sigma^2-a^2}{2}\mathbb{E}\big[f''(W^A)\big]=\mathbb{E}\big[f(W)-f(0)-af'(W)\big],\label{ce1}
	\end{equation}
	for all functions $f:\mathbb{R}\rightarrow\mathbb{R}$ for which $f$ and $f'$ are locally absolutely continuous with $\mathbb{E}|f(W)|$, $\mathbb{E}|f'(W)|<\infty.$ We will call the map $W\rightarrow W^A$ the \emph{asymmetric equilibrium transformation}.
\end{definition}

In the case where $\mathbb{E}[W]=0$ and $W$ has positive variance our definition coincides with the centered equilibrium transformation of \cite{pike}, which they name by analogy with the equilibrium transformation which \cite{pekoz1} use in the setting of exponential approximation. It is noted by \cite{pike} that there is a close relationship between their centered equilibrium transformation and the zero bias transformation first defined by \cite{gr97} and used extensively in normal approximation; see, for example, Theorem 3.2 of \cite{pike}. We will further explore the relationship between zero biasing and our asymmetric equilibrium transformation in Section \ref{sec3.2} below. 

The moment condition $\mathrm{Var}(W)>(\mathbb{E}[W])^2$ in Definition \ref{def1} is natural in the setting of asymmetric Laplace approximation. Our interest is in random variables that are close in distribution to the asymmetric Laplace, which may therefore be expected to satisfy this relationship, as does the asymmetric Laplace distribution itself. Such moment conditions also appear in other such definitions. For example, the definition of zero biasing applies only to zero mean random variables.

In the following proposition, we state some useful properties of the asymmetric equilibrium distribution. 

\begin{proposition}\label{prop3.3} Let $W$ be a random variable with mean $a\in\mathbb{R}$ and variance $\sigma^2\in(0,\infty)$ and suppose that $\sigma^2>a^2$. Suppose that $W^A$ has the $W$-asymmetric equilibrium distribution. 

\vspace{3mm}
	
\noindent	(i) The $\mathrm{AL}(0,a,\sqrt{\sigma^2-a^2})$ distribution is the unique fixed point of the asymmetric equilibrium transformation.

\vspace{3mm}
	
\noindent(ii) There exists a unique distribution for $W^A$ such that equation (\ref{ce1}) holds. Moreover, the
distribution of $W^A$ is absolutely continuous and its density is given by
	\begin{align}
		f_{W^A}(w)&=\frac{2}{\sigma^2-a^2}\bigg[a\mathbb{P}(W\leq w)-\int_{0}^{1}\mathbb{E}[W\mathbb{I}(W\leq w/v)]\,\mathrm{d}v\bigg]\label{equality2}\\
        &=\frac{2}{\sigma^2-a^2}\bigg[\int_{0}^{1}\mathbb{E}[W\mathbb{I}(W> w/v)]\,\mathrm{d}v-a\mathbb{P}(W>w)\bigg],\quad w\in\mathbb{R}. \label{equality1}		
        \end{align}
(iii)  Suppose that $\alpha=\inf\mathrm{support}(W)$ and $\beta=\sup\mathrm{support}(W)$ are finite. Then $\mathrm{support}(W^A)=[\alpha,\beta]$. A closed end point in the interval is replaced by an open end point if any of the values $\alpha$ or $\beta$ are infinite. 

In particular, if $|W|\leq C$ for some $C>0$, then $|W^A|\leq C$.

\vspace{3mm
}
  \noindent
	(iv) For $c\in\mathbb{R}$, the random variable $cW^A$ has the $cW$-asymmetric equilibrium distribution.
    \vspace{3mm}

 \vspace{3mm}

    \noindent
(v) For $r\geq0$,
	\begin{align}
		\mathbb{E}[(W^A)^r]=\frac{2\mathbb{E}[W^{r+2}]-2a(r+2)\mathbb{E}[W^{r+1}]}{(r+1)(r+2)(\sigma^2-a^2)},\label{mr}
	\end{align}
	and 
	\begin{align}
		\mathbb{E}[|W^A|^r]=\frac{2\mathbb{E}[|W|^{r+2}]-2a(r+2)\mathbb{E}[|W|^{r+1}\mathrm{sgn}(W)]}{(r+1)(r+2)(\sigma^2-a^2)},\label{absm}
	\end{align}
    where $\mathrm{sgn}(x)=-1$ for $x<0$, $\mathrm{sgn}(0)=0$ and $\mathrm{sgn}(x)=1$ for $x>0$.

    \vspace{3mm}

    \noindent
    (vi) If $W$ has characteristic function $\phi_W(t)=\mathbb{E}[\mathrm{e}^{\mathrm{i}tW}]$, then $W^A$ has characteristic function 
    \begin{equation}\label{eq:trans1}
    \phi_{W^A}(t)=\frac{2(1+[a\mathrm{i}t-1]\phi_W(t))}{t^2(\sigma^2-a^2)}.
    \end{equation}
\end{proposition}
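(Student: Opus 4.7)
The plan is to prove the six parts in the order (vi), (ii), (i), (iii), (iv), (v), since the characteristic function identity and the density formula give the most leverage for the remaining items. The main obstacle will be the existence half of (ii): verifying that the right-hand side of (\ref{equality2}) actually defines a probability density whose law satisfies (\ref{ce1}). Everything else is short once (vi) and (ii) are in hand.

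For (vi), I would substitute the test function $f_t(x)=(1-\mathrm{e}^{\mathrm{i}tx})/t^2$ into the defining identity (\ref{ce1}), applying it to the real and imaginary parts separately as in the proof of Lemma \ref{lem2.1}. Note that $f_t(0)=0$, $f_t'(x)=-\mathrm{i}\mathrm{e}^{\mathrm{i}tx}/t$, $f_t''(x)=\mathrm{e}^{\mathrm{i}tx}$, and both $f_t$ and $f_t'$ are bounded, so the admissibility hypotheses hold. The left-hand side becomes $\tfrac{\sigma^2-a^2}{2}\phi_{W^A}(t)$ and the right-hand side collapses to $t^{-2}\bigl[1+(a\mathrm{i}t-1)\phi_W(t)\bigr]$; rearranging yields (\ref{eq:trans1}).

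For (ii), uniqueness is immediate from (vi), since any $W^A$ satisfying (\ref{ce1}) has characteristic function (\ref{eq:trans1}). For existence, I would first check that (\ref{equality2}) and (\ref{equality1}) agree pointwise by using $\mathbb{P}(W\leq w)=1-\mathbb{P}(W>w)$ together with $\mathbb{E}[W\mathbb{I}(W\leq w/v)]=a-\mathbb{E}[W\mathbb{I}(W>w/v)]$ and $\mathbb{E}[W]=a$. Then, for $w>0$, an application of Fubini against the region $\{v\in(0,1]:W>w/v\}=\{W>w,\,v>w/W\}$ converts the $v$-integral into $\mathbb{E}[(W-w)\mathbb{I}(W>w)]$, giving the compact form $g(w)=\frac{2}{\sigma^2-a^2}\mathbb{E}[(W-w-a)\mathbb{I}(W>w)]$; symmetrically $g(w)=\frac{2}{\sigma^2-a^2}\mathbb{E}[(w+a-W)\mathbb{I}(W\leq w)]$ for $w<0$. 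From these compact forms I would compute $\int_{\mathbb{R}}\mathrm{e}^{\mathrm{i}tw}g(w)\,\mathrm{d}w$ via Fubini on each half-line (the inner integrals over the bounded intervals $(0,W)$ and $(W,0)$ are absolutely convergent) and check that the result equals (\ref{eq:trans1}); nonnegativity and unit total mass then follow since (\ref{eq:trans1}) is the characteristic function of a probability law, which is a genuine probability law because (\ref{ce1}) determines the expectations of a sufficiently rich class of bounded functions of $W^A$.

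With (vi) and (ii) secured, the remaining parts are short. For (i), if $W^A\overset{d}{=}W$ then (\ref{ce1}) reduces to (\ref{alapchar}) with $\mu=0$ and $b^2=\sigma^2-a^2$, so Lemma \ref{lem2.1} forces $W\sim\mathrm{AL}(0,a,\sqrt{\sigma^2-a^2})$; conversely, for such $W$ the variance formula gives $\mathrm{Var}(W)=a^2+b^2=\sigma^2$ so the moment hypothesis holds, and (\ref{ce1}) with $W=W^A$ is the content of (\ref{alapchar}). For (iii), the density formula shows that for $w>\beta$ the indicator $\mathbb{I}(W>w/v)$ vanishes almost surely for every $v\in(0,1]$ and $\mathbb{P}(W>w)=0$, so $g(w)=0$; the case $w<\alpha$ is symmetric via (\ref{equality2}); openness at an infinite endpoint and the uniform bound $|W^A|\leq C$ are immediate corollaries. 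For (iv) with $c\neq 0$, applying (\ref{ce1}) to the test function $\tilde{f}(x)=f(cx)$ (so $\tilde{f}''(x)=c^2f''(cx)$ and $\tilde{f}'(x)=cf'(cx)$) and dividing through by $c^2$ yields exactly the defining relation for $cW^A$ with respect to $cW$, which has mean $ca$ and variance $c^2\sigma^2$. Finally (v) follows by substituting $f(x)=x^{r+2}$ and $f(x)=|x|^{r+2}$ into (\ref{ce1}), noting $f''(x)=(r+2)(r+1)x^r$ and $f''(x)=(r+2)(r+1)|x|^r$ respectively; integrability holds as long as the moments appearing on the right of (\ref{mr}) and (\ref{absm}) are finite.
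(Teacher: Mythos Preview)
Your treatments of parts (i), (iii), (iv), (v) and (vi) are correct and essentially match the paper's; the paper plugs $f(x)=-t^{-2}\mathrm{e}^{\mathrm{i}tx}$ into (\ref{ce1}) for (vi) and $f(x)=x^{r+2}$, $|x|^{r+2}$ for (v), exactly as you do, and handles (i), (iii), (iv) in the same short way.

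The genuine gap is in the existence half of (ii). You propose to compute $\int_{\mathbb{R}}\mathrm{e}^{\mathrm{i}tw}g(w)\,\mathrm{d}w$, match it to the right-hand side of (\ref{eq:trans1}), and then deduce $g\geq0$ and $\int g=1$ ``since (\ref{eq:trans1}) is the characteristic function of a probability law.'' This is circular: (\ref{eq:trans1}) being a characteristic function is equivalent to the existence of $W^A$, which is precisely what you are trying to prove. Nothing in your outline establishes independently that (\ref{eq:trans1}) is positive definite, and your parenthetical justification (``because (\ref{ce1}) determines the expectations of a sufficiently rich class of bounded functions of $W^A$'') already presupposes $W^A$ exists. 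A second, smaller issue: even if the Fourier transform of $g$ matches (\ref{eq:trans1}), this only checks (\ref{ce1}) for exponential test functions, and you still owe an approximation step to reach all admissible $f$.

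The paper avoids this circularity by working directly with the density formula: it argues $g\geq0$ pointwise from (\ref{equality1}), then shows $\int g=1$ by Fubini, and finally verifies (\ref{ce1}) for general $f$ by computing $\int f''(w)g(w)\,\mathrm{d}w$ on each half-line. Your compact expressions $g(w)=\frac{2}{\sigma^2-a^2}\mathbb{E}[(W-w-a)\mathbb{I}(W>w)]$ for $w>0$ and $g(w)=\frac{2}{\sigma^2-a^2}\mathbb{E}[(w+a-W)\mathbb{I}(W\leq w)]$ for $w<0$ are correct and would serve well as inputs into such a direct verification; the Fourier detour does not let you skip the pointwise nonnegativity check.
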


\begin{remark}
Some of the properties of the asymmetric equilibrium distribution given in Proposition \ref{prop3.3} closely resemble those of the zero bias distribution (as given in \cite{gr97} and \cite[Section 2.3.3]{chen}). Indeed, the $\mathrm{N}(0,\sigma^2)$ distribution is the unique fixed point of the zero bias transformation; when a random variable $X$ has zero mean and finite positive variance $\sigma^2$ there exists a unique distribution for the zero biased random variable $X^z$ with absolutely continuous density; if $|X|$ is bounded by some constant $C$ then also $|X^z|\leq C$; and $cX^z$ has the $cX$-zero bias distribution.
\end{remark}

\begin{remark}
Let $k\geq1$ be a positive integer. By H\"older's inequality, we have that $\mathbb{E}[|W|^{k+1}]\leq\mathbb{E}[|W|^{k+2}]/\sqrt{\mathbb{E}[W^2]}\leq \mathbb{E}[|W|^{k+2}]/\sigma$, since $\mathbb{E}[W^2]=\sigma^2+a^2\geq \sigma^2$. Therefore, from the absolute moment relation (\ref{absm}) we obtain the bound
\begin{align}\label{simplebd}
\mathbb{E}[|W^A|^k]&\leq\frac{2\mathbb{E}[|W|^{k+2}]+2(k+2)|a|\mathbb{E}[|W|^{k+1}]}{(k+1)(k+2)(\sigma^2-a^2)}\nonumber \\
&\leq \frac{2(1+(k+2)|a|/\sigma)}{(k+1)(k+2)(\sigma^2-a^2)}\mathbb{E}[|W|^{k+2}].
\end{align}
We will make use of this bound in our proof of Theorem \ref{thm4.1}.
\end{remark}

\begin{proof}
	(i) This follows immediately from Definition \ref{def1} and Lemma \ref{lem2.1}.
	
\vspace{3mm}

\noindent (ii) We begin by verifying the equality between expressions (\ref{equality2}) and (\ref{equality1}). For this, we note that $\int_{0}^{1}\mathbb{E}[W\mathbb{I}(W\leq w/v)]\,\mathrm{d}v+\int_{0}^{1}\mathbb{E}[W\mathbb{I}(W> w/v)]\,\mathrm{d}v=\int_0^1\mathbb{E}[W]\,\mathrm{d}v=a$. Therefore 
\begin{align*}
\int_{0}^{1}\mathbb{E}[W\mathbb{I}(W> w/v)]\,\mathrm{d}v-a\mathbb{P}(W>w)&=a-   \int_{0}^{1}\mathbb{E}[W\mathbb{I}(W\leq w/v)]\,\mathrm{d}v-a(1-\mathbb{P}(W\leq a))\\
&=a\mathbb{P}(W\leq w)-\int_{0}^{1}\mathbb{E}[W\mathbb{I}(W\leq w/v)]\,\mathrm{d}v,
\end{align*}
which confirms the equality between (\ref{equality2}) and (\ref{equality1}).

We note that the distribution of $W^A$ must be unique (if it exists). This follows because $\mathbb{E}[f''(W^A)]=\mathbb{E}[f''(Y^A)]$ for, say, all twice differentiable functions $f$ with compact support implies that $W^A$ and $ Y^A$ are equal in distribution.  

We now prove that, under the assumption $\sigma^2>a^2$, if the distribution of $W^A$ exists then it is absolutely continuous with density $f_{W^A}(w)$, $w\in\mathbb{R}$. We begin by proving that $f_{W^A}(w)\geq0$ for all $w\in\mathbb{R}$. To show this, we will work with the representation (\ref{equality1}). We observe that, for $w\in\mathbb{R}$,
\begin{align*}
a\mathbb{P}(W<w)\leq a=\mathbb{E}[W]=\int_0^1\mathbb{E}[W]\,\mathrm{d}v\leq\int_{0}^{1}\mathbb{E}[W\mathbb{I}(W> w/v)]\,\mathrm{d}v. 
\end{align*}
With this inequality and the assumption $\sigma^2>a^2$ we deduce that $f_{W^A}(w)\geq0$ for all $w\in\mathbb{R}$.

We now prove that $\int^{\infty}_{-\infty} f_{W^A}(w)\,\mathrm{d}w=1$. By an application of Fubini's theorem we obtain that
	\begin{align*}
		\int_{0}^{\infty}\mathbb{E}[\mathbb{I}(W>w)]\,\mathrm{d}w&=\mathbb{E}\bigg[\int_{0}^{\infty}\mathbb{I}(W>w)\,\mathrm{d}w\bigg]=\mathbb{E}\bigg[\int_{0}^{0\vee W}1\,\mathrm{d}w\bigg]=\mathbb{E}\big[W\mathbb{I}(W>0)\big]
        \end{align*}
        and
        \begin{align*}
		\int_{0}^{\infty}\int_{0}^{1}\mathbb{E}\big[W\mathbb{I}(W>w/v)\big]\,\mathrm{d}v\,\mathrm{d}w&=\mathbb{E}\bigg[W\int_{0}^{1}\int_{0}^{\infty}\mathbb{I}(vW>w)\,\mathrm{d}w\,\mathrm{d}v\bigg]\\
		&=\mathbb{E}\bigg[W\int_{0}^{1}vW\mathbb{I}(W>0)\,\mathrm{d}v\bigg]=
		\frac{1}{2}\mathbb{E}\big[W^2\mathbb{I}(W>0)\big].
	\end{align*}
By similar calculations we obtain the equalities
\begin{align*} \int_{-\infty}^{0}\mathbb{E}[\mathbb{I}(W\leq w)]\,\mathrm{d}w&=-\mathbb{E}\big[W\mathbb{I}(W\leq0)\big]
\end{align*}
and
\begin{align*}
 \int^{0}_{-\infty}\int_{0}^{1}\mathbb{E}\big[W\mathbb{I}(W\leq w/v)\big]\,\mathrm{d}v\,\mathrm{d}w
		&=-\frac{1}{2}\mathbb{E}\big[W^2\mathbb{I}(W\leq0)\big].   
\end{align*}
On writing $\int_{-\infty}^\infty f_{W^A}(w)\,\mathrm{d}w=\int_{-\infty}^{0}f_{W^A}(w)\,\mathrm{d}w+\int_{0}^{\infty}f_{W^A}(w)\,\mathrm{d}w$ and using the representations (\ref{equality2}) and (\ref{equality1}) for $f_{W^A}(w)$ on the intervals $(-\infty,0)$ and $(0,\infty)$, respectively, we obtain that
	\begin{align*}
		\int_{-\infty}^\infty f_{W^A}(w)\,\mathrm{d}w&=
		\frac{2}{\sigma^2-a^2}\bigg\{\int_{-\infty}^{0}\bigg(a\mathbb{E}\big[\mathbb{I}(W\leq w)\big]-\int_{0}^{1}\mathbb{E}\big[W\mathbb{I}(W<w/v)\big]\,\mathrm{d}v\bigg)\,\mathrm{d}w\\
        &\quad+\int_{0}^{\infty}\bigg(\int_{0}^{1}\mathbb{E}\big[W\mathbb{I}(W>w/v)\big]\,\mathrm{d}v-a\mathbb{E}[\mathbb{I}(W>w)]\bigg)\,\mathrm{d}w\bigg\}\\
		&=\frac{2}{\sigma^2-a^2}\bigg\{-a\mathbb{E}\big[W\mathbb{I}(W\leq0)\big]+\frac{1}{2}\mathbb{E}\big[W^2\mathbb{I}(W\leq0)\big]\\
		&\quad+\frac{1}{2}\mathbb{E}\big[W^2\mathbb{I}(W>0)\big]-a\mathbb{E}\big[W\mathbb{I}(W>0)\big]\bigg\}\\
		&=\frac{1}{\sigma^2-a^2}\big(\mathbb{E}[W^2]-2a\mathbb{E}[W]\big)=1,
	\end{align*}
where in obtaining the final equality we used that $\mathbb{E}[W]=a$ and $\mathbb{E}[W^2]=\sigma^2+a^2$. We have thus shown that the distribution of $W^A$ is absolutely continuous with density $f_{W^A}(w)$, $w\in\mathbb{R}$.

    We finally prove that if the random variable $W^A$ has density $f_{W^A}(w)$ (as given by equations (\ref{equality2}) and (\ref{equality1})), then $W^A$ satisfies equation (\ref{ce1}).
    We begin by applying Fubini's theorem to obtain several useful equalities. Firstly, we have that
	\begin{align*}
		\int_{0}^{\infty}f''(w)\mathbb{E}\big[\mathbb{I}(W>w)\big]\,\mathrm{d}w&=\mathbb{E}\bigg[\int_{0}^{\infty}f''(w)\mathbb{I}(W>w)\,\mathrm{d}w\bigg]\\
		&=\mathbb{E}\bigg[\int_{0}^{0\vee W}f''(w)\,\mathrm{d}w\bigg]\\
        &=\mathbb{E}\big[f'(0\vee W)-f'(0)\big]\\
        &=\mathbb{E}\big[f'(W)\mathbb{I}(W>0)\big]
\end{align*}
and
\begin{align*}
		\int_{0}^{\infty}f''(w)\bigg(\int_{0}^{1}\mathbb{E}\big[W\mathbb{I}(W>w/v)\big]\,\mathrm{d}v\bigg)\,\mathrm{d}w&=\mathbb{E}\bigg[W\int_{0}^{1}f''(w)\mathbb{I}(W>w/v)\,\mathrm{d}w\,\mathrm{d}v\bigg]\\
		&=\mathbb{E}\bigg[W\int_{0}^{1}\int_{0}^{vW\vee0}f''(w)\,\mathrm{d}w\,
		\mathrm{d}v\bigg]\\	
        &=\mathbb{E}\bigg[\int_{0}^{1}\big(f'(vW\vee 0)-f'(0)\big)\,\mathrm{d}v\bigg]\\
		&=\mathbb{E}\bigg[\int_{0}^{1}Wf'(vW)\mathbb{I}(W>0)\,\mathrm{d}v\bigg]\\
		&=\mathbb{E}\big[\big(f(W)-f(0)\big)\mathbb{I}(W>0)\big].
	\end{align*}
By similar calculations we obtain the equalities
	\begin{align*}
		\int^{0}_{-\infty}f''(w)\mathbb{E}\big[\mathbb{I}(W\leq w)\big]\,\mathrm{d}w&=-\mathbb{E}\big[f'(W)\mathbb{I}(W\leq0)\big]
        \end{align*}
        and
        \begin{align*}
		\int^{0}_{-\infty}f''(w)\bigg(\int_{0}^{1}\mathbb{E}\big[W\mathbb{I}(W>w/v)\big]\,\mathrm{d}v\bigg)\,\mathrm{d}w&=-\mathbb{E}\big[\big(f(W)-f(0)\big)\mathbb{I}(W\leq0)\big].
	\end{align*}
	Combining the above equalities we have 
	\begin{align*}
		\frac{\sigma^2-a^2}{2}\mathbb{E}\big[f''(W^A)\big]&=\int_{-\infty}^{0}f''(w)\bigg(a\mathbb{E}\big[\mathbb{I}(W\leq w)\big]-\int_{0}^{1}\mathbb{E}\big[W\mathbb{I}(W\leq w/v)\big]\,\mathrm{d}v\bigg)\,\mathrm{d}w\\
       & \quad+\int_{0}^{\infty}f''(x)\bigg(\int_{0}^{1}\mathbb{E}\big[W\mathbb{I}(W>w/v)\big]\,\mathrm{d}v-a\mathbb{E}\big[\mathbb{I}(W>w)\big]\bigg)\mathrm{d}w\\
		&=-a\mathbb{E}\big[f'(W)\mathbb{I}(W\leq0)\big]+\mathbb{E}\big[\big(f(W)-f(0)\big)\mathbb{I}(W\leq0)\big]\\
        &\quad+\big(f(W)-f(0)\big)\mathbb{I}(W>0)\big]-a\mathbb{E}[f'(W)\mathbb{I}(W>0)]\\
		&=\mathbb{E}\big[f(W)-f(0)-af'(W)\big],
	\end{align*}
	which verifies that if $W^A$ has density $f_{W^A}$ then equation (\ref{ce1}) is satisfied.
	
\vspace{3mm}

\noindent {(iii)} This follows immediately from the formulas (\ref{equality2}) and (\ref{equality1}) for $f_{W^A}(w)$.

        \vspace{3mm}

\noindent (iv) 
On letting $\tilde{f}(x)=f(cx)$ (so that $\tilde{f}^{(k)}(x)=c^{k}f(cx)$) we get that 
	\begin{align*}
		\mathbb{E}\big[f(cW)-f(0)-acf'(cW)\big]&=\mathbb{E}\big[\tilde{f}(W)-\tilde{f}(0)-a\tilde{f}'(W)\big]\\
		&=\frac{\sigma^2-a^2}{2}\mathbb{E}\big[\tilde{f}''(W^A)\big]=\frac{(c\sigma )^2-(ca)^2}{2}\mathbb{E}[f''(cW^A)].
	\end{align*}
This confirms that $cW^A$ has the $cW$-asymmetric equilibrium distribution.

\vspace{3mm}

\noindent (v) We obtain the moment relation (\ref{mr}) by substituting $f(w)=w^{r+2}$ into (\ref{ce1}), and similarly we obtain the absolute moment relation (\ref{absm}) by plugging $f(w)=|w|^{r+2}$ into (\ref{ce1}).

\vspace{3mm}

\noindent (vi)
By the definition \eqref{ce1} of $W^A$ we have that
\[
\mathbb{E}[f^{\prime\prime}(W^A)]=\frac{2}{\sigma^2-a^2}\left(\mathbb{E}[f(W)]-f(0)-a\mathbb{E}[f^\prime(W)]\right).
\]
Taking $f(x)=-t^{-2}\mathrm{e}^{\mathrm{i}tx}$, the left-hand side above becomes the characteristic function $\phi_{W^A}(t)$ of $W^A$. With this choice, we thus have
\begin{equation*}
\phi_{W^A}(t)=\frac{2}{\sigma^2-a^2}\left(-\frac{1}{t^2}\phi_W(t)+\frac{1}{t^2}+\frac{a\mathrm{i}}{t}\phi_W(t)\right)=\frac{2\left([a\mathrm{i}t-1]\phi_W(t)+1\right)}{t^2(\sigma^2-a^2)},
\end{equation*}
as required.
\end{proof}

\subsection{Construction of asymmetric equilibrium distributions}\label{sec3.2}

In this section, we explore situations in which we may give explicit constructions of the asymmetric equilibrium transformation of a given random variable $W$. The most fruitful of these is the setting where $W$ is a random sum, that is, has the form 
\begin{equation}\label{eq:RandomSum1}
W=\sum_{i=1}^NY_i,
\end{equation}
for some sequence $Y_1,Y_2,\ldots$ of independent and identically distributed random variables which are also independent of the non-negative, integer-valued random variable $N$. In the important special case where $N\sim\mathrm{Geom}(p)$ has a geometric distribution with parameter $p\in(0,1)$ and mass function $\mathbb{P}(N=k)=p(1-p)^{k-1}$ for $k=1,2,\ldots$ we refer to this random sum as a geometric random sum. 

As we will see from the bounds of Theorem \ref{thm3.5} below, we will obtain good asymmetric Laplace approximation results when the difference $|W-W^A|$ is small. Motivated by similar constructions from other equilibrium transformations, in the case of a random sum $W$ as above, we will look for representations of $W^A$ of the form 
\begin{equation}\label{eq:RandomSum2}
W^A=\sum_{i=1}^MY_i+X^A,
\end{equation}
for some suitable random variables $M$ and $X$, and where all random variables on the right-hand side are independent. We will verify such a representation using a characteristic function approach. For a random variable $W$ we will denote its characteristic function by $\phi_W(t)=\mathbb{E}[\mathrm{e}^{\mathrm{i}tW}]$, and for an integer-valued random variable $N$ we let $G_N(z)=\mathbb{E}[z^N]$ denote its probability generating function. Letting $a=\mathbb{E}[W]$ and $\sigma^2=\mathrm{Var}(W)$, and assuming as usual that $\sigma^2>a^2$, we note that for $W$ as in \eqref{eq:RandomSum1} we have $\phi_W(t)=G_N(\phi_Y(t))$, and hence \eqref{eq:trans1} gives us that
\[
\phi_{W^A}(t)=\frac{2\{1+[a\mathrm{i}t-1]G_N(\phi_Y(t))\}}{t^2(\sigma^2-a^2)}.
\]
The random variable on the right-hand side of \eqref{eq:RandomSum2} has characteristic function
\[
G_M(\phi_Y(t))\phi_{X^A}(t)=\frac{2G_M(\phi_Y(t))\{1+[\mathrm{i}t\mathbb{E}[X]-1]\phi_X(t)\}}{t^2(\mathrm{Var}(X)-(\mathbb{E}[X])^2)}.
\]
It follows that to verify the representation \eqref{eq:RandomSum2} for $W$ as in \eqref{eq:RandomSum1}, we need only check that
\begin{equation}\label{eq:CharFn3}
\frac{1+[a\mathrm{i}t-1]G_N(\phi_Y(t))}{(\sigma^2-a^2)G_M(\phi_Y(t))}=\frac{1+[\mathrm{i}t\mathbb{E}[X]-1]\phi_X(t)}{\mathrm{Var}(X)-(\mathbb{E}[X])^2}.
\end{equation}
This is straightforward to complete in the setting where $W$ is a random sum as in \eqref{eq:RandomSum1} with the number of terms $N$ itself a geometric random sum, as in the following lemma.
\begin{lemma}\label{lem:GeometricSum}
Let $Y,Y_1,Y_2,\ldots$ and $T,T_1,T_2,\ldots$ be two independent sequences of independent and identically distributed random variables, where $Y$ is supported on $\mathbb{R}$ and $T$ on the non-negative integers. Let $W=\sum_{j=1}^NY_j$, where $N=\sum_{k=1}^ST_k$ and $S\sim\mathrm{Geom}(q)$ independent of all else. Let $a=\mathbb{E}[W]$ and $\sigma^2=\mathrm{Var}(W)$, and assume that $\sigma^2>a^2$. Then 
\[W^A=\sum_{j=1}^MY_j+X^A\]
has the asymmetric equilibrium distribution with respect to $W$, where $M=\sum_{k=1}^{S-1}T_k$ and $X=\sum_{j=1}^TY_j$.
\end{lemma}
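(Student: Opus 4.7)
The plan is to verify the identity \eqref{eq:CharFn3} from the setup just before the lemma, which is sufficient (via the characteristic function calculation already outlined in the excerpt and Proposition \ref{prop3.3}(vi)) to conclude that $\sum_{j=1}^MY_j+X^A$ and $W^A$ have the same distribution. Throughout, I would fix abbreviations $\phi:=\phi_Y(t)$, $\mu_X:=\mathbb{E}[X]$, and note that independence and the random-sum structure give $\phi_W(t)=G_N(\phi)=G_S(G_T(\phi))$ and $\phi_X(t)=G_T(\phi)$. Note also the mean identity $a=\mathbb{E}[S]\mathbb{E}[T]\mathbb{E}[Y]=\mu_X/q$, which will be used to convert $a\mathrm{i}t$ factors into $\mathrm{i}t\mu_X$ factors.

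The first real step is to exploit the explicit geometric probability generating function $G_S(z)=qz/(1-(1-q)z)$. Since $S-1$ is geometric starting at $0$, one immediately gets $G_{S-1}(z)=q/(1-(1-q)z)$, and hence $G_M(\phi)=G_{S-1}(G_T(\phi))=q/(1-(1-q)\phi_X(t))$. Two identities then fall out essentially algebraically: first, $G_N(\phi)=G_T(\phi)G_{S-1}(G_T(\phi))=\phi_X(t)G_M(\phi)$, and second, multiplying the definition of $G_M(\phi)$ by its denominator yields $G_M(\phi)-(1-q)G_N(\phi)=q$, which after rearrangement becomes the crucial relation
\[
q\bigl(1-G_N(\phi)\bigr)=\bigl(1-\phi_X(t)\bigr)G_M(\phi).
\]

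Plugging these into the numerator of the LHS of \eqref{eq:CharFn3} gives
\[
1+[a\mathrm{i}t-1]G_N(\phi)=a\mathrm{i}t\,\phi_X(t)G_M(\phi)+\bigl(1-G_N(\phi)\bigr)=\frac{G_M(\phi)}{q}\bigl\{\mathrm{i}t\mu_X\phi_X(t)+1-\phi_X(t)\bigr\},
\]
so that LHS of \eqref{eq:CharFn3} equals $\{1+[\mathrm{i}t\mu_X-1]\phi_X(t)\}/\bigl(q(\sigma^2-a^2)\bigr)$. Comparison with the RHS therefore reduces the lemma to the single scalar identity $q(\sigma^2-a^2)=\mathrm{Var}(X)-\mu_X^2$.

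The remaining step, which I expect to be the main (though routine) bookkeeping obstacle, is to verify this variance identity. Using the conditional variance formula for random sums, $\mathrm{Var}(X)=\mathbb{E}[T]\mathrm{Var}(Y)+\mathrm{Var}(T)(\mathbb{E}[Y])^2$, and
\[
\sigma^2=\mathbb{E}[N]\mathrm{Var}(Y)+\mathrm{Var}(N)(\mathbb{E}[Y])^2,\qquad \mathrm{Var}(N)=\frac{\mathrm{Var}(T)}{q}+\frac{(1-q)(\mathbb{E}[T])^2}{q^2},
\]
together with $\mathbb{E}[N]=\mathbb{E}[T]/q$ and $\mathbb{E}[S]=1/q$, $\mathrm{Var}(S)=(1-q)/q^2$, one gets $\sigma^2=\mathrm{Var}(X)/q+(1-q)\mu_X^2/q^2$. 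Subtracting $a^2=\mu_X^2/q^2$ yields exactly $\sigma^2-a^2=(\mathrm{Var}(X)-\mu_X^2)/q$, as required. This also confirms that the hypothesis $\sigma^2>a^2$ is equivalent to $\mathrm{Var}(X)>\mu_X^2$, so $X^A$ is itself well-defined in the sense of Definition \ref{def1}, and the proposed construction makes sense.
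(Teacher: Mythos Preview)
Your proof is correct and follows essentially the same route as the paper: both reduce to verifying \eqref{eq:CharFn3} by exploiting the explicit geometric probability generating function to get $G_N(\phi)=\phi_X(t)G_M(\phi)$ and $G_M(\phi)=q/(1-(1-q)\phi_X(t))$, and then check the remaining scalar identities $\mu_X=aq$ and $\mathrm{Var}(X)-\mu_X^2=q(\sigma^2-a^2)$ via the conditional variance formula. Your algebraic organisation (isolating the relation $q(1-G_N(\phi))=(1-\phi_X)G_M(\phi)$) is slightly different but amounts to the same computation.
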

\begin{proof}
It suffices to show that the random variables defined in the lemma satisfy \eqref{eq:CharFn3}. To that end, we note that
\[
G_N(z)=G_S(G_T(z))=\frac{qG_T(z)}{1-(1-q)G_T(z)},
\]
$G_M(z)=q/(1-(1-q)G_T(z))$ and $\phi_X(t)=G_T(\phi_Y(t))$, so that \eqref{eq:CharFn3} becomes
\begin{equation}\label{eq:CharFn4}
\frac{[\mathrm{i}t\mathbb{E}[X]-1]G_T(\phi_Y(t))+1}{\mathrm{Var}(X)-(\mathbb{E}[X])^2}=\frac{[aq\mathrm{i}t-1]G_T(\phi_Y(t))+1}{q(\sigma^2-a^2)}.
\end{equation}
It remains only to check that the relevant moments are such that this equation holds. We have that
\[
a=\mathbb{E}[W]=\mathbb{E}[N]\mathbb{E}[Y]=\frac{\mathbb{E}[T]\mathbb{E}[Y]}{q},
\]
so that 
\begin{equation}\label{eq:Moments1}
\mathbb{E}[X]=\mathbb{E}[T]\mathbb{E}[Y]=aq.
\end{equation}
For the second moments, we have that
\begin{align*}
\sigma^2&=\mathrm{Var}(W)
=\mathrm{Var}(N)(\mathbb{E}[Y])^2+\mathbb{E}[N]\mathrm{Var}(Y)\\
&=(\mathbb{E}[Y])^2\left(\mathrm{Var}(S)(\mathbb{E}[T])^2+\mathbb{E}[S]\mathrm{Var}(T)\right)+\mathbb{E}[N]\mathrm{Var}(Y)\\
&=(\mathbb{E}[Y])^2\left(\frac{(1-q)(\mathbb{E}[T])^2}{q^2}+\frac{\mathrm{Var}(T)}{q}\right)+\frac{\mathbb{E}[T]\mathrm{Var}(Y)}{q},
\end{align*}
so that
\[
q\sigma^2=\mathrm{Var}(T)(\mathbb{E}[Y])^2+\mathbb{E}[T]\mathrm{Var}(Y)+a^2q(1-q).
\]
Then
\[
\mathrm{Var}(X)=\mathrm{Var}(T)(\mathbb{E}[Y])^2+\mathbb{E}[T]\mathrm{Var}(Y)=q\sigma^2-a^2q(1-q)
\]
and
\begin{equation}\label{eq:Moments2}
\mathrm{Var}(X)-(\mathbb{E}[X])^2=q(\sigma^2-a^2).
\end{equation}
From \eqref{eq:Moments1} and \eqref{eq:Moments2} we see that \eqref{eq:CharFn4} does indeed hold, completing the proof.
\end{proof}
Of particular interest for our application in Theorem \ref{thm4.1} is the special case where $W$ is a geometric random sum (i.e., when $T=1$ almost surely). A representation in this case when the $Y_i$ are independent and identically distributed follows immediately from Lemma \ref{lem:GeometricSum}. For our application, however, we wish to relax this assumption and consider independent (but not necessarily identically distributed) random variables. We give the construction of $W^A$ in this setting. Our proof follows the approach used in the proof of Theorem 4.4 of \cite{pike} in which a construction was given for the centered equilibrium distribution of a random sum of independent zero-mean random variables.
\begin{lemma}\label{lem3.4}
	Suppose that $X_1,X_2,\ldots$ is a sequence of independent random variables with $\mathbb{E}[X_i]=0$, $\mathrm{Var}(X_i)=\sigma^2\in(0,\infty)$, $i=1,2,\ldots$.
	Let $Y_i=X_i+\sqrt{p}a$, $i=1,2,\ldots$. Define $W=\sqrt{p}\sum_{i=1}^{N}Y_i$, where $N\sim\mathrm{Geom}(p)$, $p\in(0,1)$, is independent of the $X_i$'s and $a\in\mathbb{R}$ is a constant.  Then, for $\sigma^2>pa^2$, the random variable
	\begin{equation*}
		W^A=\sqrt{p}\bigg(\sum_{i=1}^{N-1}Y_i+Y_N^A\bigg)
	\end{equation*}
	has the asymmetric equilibrium distribution with respect to $W$.
\end{lemma}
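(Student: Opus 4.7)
The plan is to verify directly that the proposed $W^A$ satisfies the defining equation \eqref{ce1}. A short computation using Wald's identities gives $\mathbb{E}[W]=a$ and $\mathrm{Var}(W)=\sigma^2+(1-p)a^2$, hence $\mathrm{Var}(W)-(\mathbb{E}[W])^2=\sigma^2-pa^2$, which is positive by hypothesis. By Proposition \ref{prop3.3}(ii), the asymmetric equilibrium distribution of $W$ is therefore unique and characterised by \eqref{ce1}, so it suffices to check that the law of the proposed $W^A$ satisfies this equation for a suitable class of test functions $f$.

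Write $S_n=\sqrt{p}\sum_{i=1}^n Y_i$ (with $S_0=0$) and $T_n=S_{n-1}+\sqrt{p}Y_n^A$, so that $W$ has the distribution of $S_N$ and the proposed $W^A$ has the distribution of $T_N$, with $N\sim\mathrm{Geom}(p)$ independent of everything else. The central step is a one-step identity: for each fixed $n\geq1$, apply \eqref{ce1} for $Y_n^A$ (the asymmetric equilibrium of $Y_n$, which has mean $\sqrt{p}a$ and variance $\sigma^2$ with $\sigma^2>pa^2$) to the auxiliary function $g(y)=f(S_{n-1}+\sqrt{p}y)$, conditional on $S_{n-1}$. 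Since $g'(y)=\sqrt{p}f'(S_{n-1}+\sqrt{p}y)$ and $g''(y)=pf''(S_{n-1}+\sqrt{p}y)$, taking an outer expectation over $S_{n-1}$ (independent of both $Y_n$ and $Y_n^A$) produces
\begin{equation}\label{plan:step}
\frac{p(\sigma^2-pa^2)}{2}\mathbb{E}[f''(T_n)]=\mathbb{E}[f(S_n)]-\mathbb{E}[f(S_{n-1})]-pa\,\mathbb{E}[f'(S_n)].
\end{equation}

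Summing \eqref{plan:step} over $n\geq1$ weighted by $(1-p)^{n-1}$ then yields the result. The left-hand side equals $\tfrac{\sigma^2-pa^2}{2}\mathbb{E}[f''(W^A)]$ because $p(1-p)^{n-1}=\mathbb{P}(N=n)$. On the right, one has $\sum_{n\geq1}(1-p)^{n-1}\mathbb{E}[f(S_n)]=\mathbb{E}[f(W)]/p$ and a shift of index gives $\sum_{n\geq1}(1-p)^{n-1}\mathbb{E}[f(S_{n-1})]=f(0)+(1-p)\mathbb{E}[f(W)]/p$, while the final summand collapses to $a\,\mathbb{E}[f'(W)]$. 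Combining these produces exactly $\mathbb{E}[f(W)-f(0)-af'(W)]$, which is \eqref{ce1} for $W^A$.

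The main technical care lies in verifying that $g(y)=f(S_{n-1}+\sqrt{p}y)$ is admissible as a test function in \eqref{ce1} for $Y_n^A$, i.e.\ that $g$ and $g'$ are locally absolutely continuous with $\mathbb{E}|g'(Y_n)|,\mathbb{E}|g''(Y_n)|<\infty$; this follows from the analogous assumptions on $f$ relative to $W$ together with the independence of $S_{n-1}$ and $Y_n$. An application of Fubini's theorem, justified by the geometric tail of $N$, then legitimises the interchange of summation and expectation used to assemble the final identity.
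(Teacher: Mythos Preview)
Your proposal is correct and follows essentially the same approach as the paper: both apply the defining relation \eqref{ce1} for $Y_n^A$ to the shifted test function $y\mapsto f(S_{n-1}+\sqrt{p}y)$ to obtain the one-step identity, then exploit the geometric weights to telescope. The only cosmetic difference is that the paper writes $\mathbb{P}(N=n)=p\,\mathbb{P}(N\geq n)$ and telescopes via $\sum_{n\geq1}\mathbb{P}(N\geq n)(g_n-g_{n-1})=\mathbb{E}[g_N-g_0]$, whereas you weight by $(1-p)^{n-1}$ and shift the index of summation; these are equivalent manipulations.
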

\begin{proof} By the law of total probability, we have that 
	\begin{align}
		\mathbb{E}[f''(W^A)] \nonumber
        &=\sum_{n=1}^{\infty}\mathbb{E}\bigg[f''\bigg(\sqrt{p}\sum_{i=1}^{N-1}Y_i+\sqrt{p}Y_N^A\bigg)\,\bigg|\,N=n\bigg]\mathbb{P}(N=n)\nonumber\\
		&=\sum_{n=1}^{\infty}\mathbb{E}\bigg[f''\bigg(\sqrt{p}\sum_{i=1}^{n-1}Y_i+\sqrt{p}Y_n^A\bigg)\bigg]\mathbb{P}(N=n)\nonumber,
        \end{align}
        where we used the independence assumption to obtain the second equality. Let $S_n=\sum_{i=1}^nY_i$ and $f_s(x)=f(\sqrt{p}s+\sqrt{p}x)$. Then we may write
        \begin{align}
    \mathbb{E}\bigg[f''\bigg(\sqrt{p}\sum_{i=1}^{n-1}Y_i+\sqrt{p}Y_n^A\bigg)\bigg]    &=\mathbb{E}\bigg[\mathbb{E}\bigg[f''\bigg(\sqrt{p}s+\sqrt{p}Y_n^A\bigg)\,\bigg|\,S_{n-1}=s\bigg]\bigg]\nonumber\\
        &=\frac{1}{p}\mathbb{E}\big[\mathbb{E}\big[f_s''(Y_n^A)\,|\,S_{n-1}=s\big]\big]. \label{jaja}
	\end{align}
	We also note that
    \begin{align}
		\mathbb{E}[f''(Y^A_n)\,|\,S_{n-1}=s]=	\mathbb{E}[f''(Y_n^A)]&=\frac{2}{\sigma^2-pa^2}\mathbb{E}[f(Y_n)-f(0)-\sqrt{p}af'(Y_n)]\nonumber\\
		&=\frac{2}{\sigma^2-pa^2}\mathbb{E}[f(Y_n)-f(0)-\sqrt{p}af'(Y_n)\,|\,S_{n-1}=s]\label{s1},
	\end{align}
    where we used that $\mathbb{E}[Y_i]=\sqrt{p}a$ and $\mathrm{Var}(Y_i)=\sigma^2$ for $i\geq1$. Inserting (\ref{s1}) into (\ref{jaja}) and using that $Y_n^A$ is independent of $S_{n-1}=\sum_{i=1}^{n-1}Y_i,$ we obtain that
	\begin{align*}
		&\mathbb{E}\bigg[f''\bigg(\sqrt{p}\sum_{i=1}^{n-1}Y_i+\sqrt{p}Y_n^A\bigg)\bigg]\\
		&\quad=\frac{2/p}{\sigma^2-pa^2}\mathbb{E}\bigg[\mathbb{E}\bigg[f_s(Y_n)-f_s(0)-\sqrt{p}af_s'(Y_n)\,\bigg|\,S_{n-1}=s\bigg]\bigg]\\
		&\quad=\frac{2/p}{\sigma^2-pa^2}\mathbb{E}\bigg[\mathbb{E}\bigg[f(\sqrt{p}s+\sqrt{p}Y_n)-f(\sqrt{p}s)-paf'(\sqrt{p}s+\sqrt{p}Y_n)\,\bigg|\,S_{n-1}=s\bigg]\bigg]\\
		&\quad=\frac{2/p}{\sigma^2-pa^2}\mathbb{E}\bigg[f\bigg(\sqrt{p}\sum_{i=1}^{n}Y_i\bigg)-f\bigg(\sqrt{p}\sum_{i=1}^{n-1}Y_i\bigg)-paf'\bigg(\sqrt{p}\sum_{i=1}^{n}Y_i\bigg)\bigg].
	\end{align*}
  To ease notation we will now let $\mathbb{Y}=\{Y_i\}_{i\geq1}$ and $g_n(\mathbb{Y})=f(\sqrt{p}\sum_{i=1}^{n}Y_i)$. Then, since $N\sim \mathrm{Geom}(p)$, we have that $\mathbb{P}(N=n)=p\mathbb{P}(N\geq n)$, and we get that 
	\begin{align*}
		\mathbb{E}[f''(W^A)]		&=\frac{2/p}{\sigma^2-pa^2}\sum_{n=1}^{\infty}\mathbb{E}\bigg[g_n(\mathbb{Y})-g_{n-1}(\mathbb{Y})-paf'\bigg(\sqrt{p}\sum_{i=1}^{n}Y_i\bigg)\bigg]\mathbb{P}(N=n)\\
		&=\frac{2}{\sigma^2-pa^2}\bigg\{\mathbb{E}\bigg[\sum_{n=1}^{\infty}\mathbb{P}(N\geq n)\big(g_n(\mathbb{Y})-g_{n-1}(\mathbb{Y})\big)\bigg]-{a}\mathbb{E}\bigg[f'\bigg(\sqrt{p}\sum_{i=1}^{N}Y_i\bigg)\bigg]\bigg\}\\
		&=\frac{2}{\sigma^2-pa^2}\bigg\{\mathbb{E}\big[g_N(\mathbb{Y})-g_{0}(\mathbb{Y})\big]-{a}\mathbb{E}\bigg[f'\bigg(\sqrt{p}\sum_{i=1}^{N}Y_i\bigg)\bigg]\bigg\}\\
		&=\frac{2}{\mathrm{Var}(W)-(\mathbb{E}[W])^2}\big\{\mathbb{E}[f(W)]-f(0)-\mathbb{E}[W]\mathbb{E}[f'(W)]\big\},
\end{align*}
where in the last step we calculated $\mathbb{E}[W]=\sqrt{p}\mathbb{E}[N]\mathbb{E}[Y_1]=a$ and $\mathrm{Var}(W)=p(\mathbb{E}[N]\mathrm{Var}(Y_1)+\mathrm{Var}(N)(\mathbb{E}[Y_1])^2)=\sigma^2+(1-p)a^2$ by the standard formulas for the mean and variance of a random sum of independent random variables $Y_1=X_1+\sqrt{p}a,Y_2=X_2+\sqrt{p}a,\ldots$ with common mean $\mathbb{E}[Y_1]=\sqrt{p}a$ and common variance $\mathrm{Var}(Y_1)=\sigma^2$. This completes the proof of the lemma.
\end{proof}
\begin{remark}
As we will see, the construction of Lemma \ref{lem3.4} is particularly well suited to application in conjunction with the bounds of Theorem \ref{thm3.5} below. It is therefore natural to look for other situations in which $W$ is a random sum as in \eqref{eq:RandomSum1} for a sequence $Y,Y_1,Y_2,\ldots$ of independent (and, for simplicity here) identically distributed random variables and a non-negative, integer-valued random variable $N$, and we may construct $W^A=\sum_{j=1}^MY_j+Y^A$ for some non-negative, integer-valued random variable $M$. In the case where $W$ is a geometric random sum, this holds with $M=N-1$. In general, if $\sum_{j=1}^{M}Y_j+Y^A$ has the asymmetric equilibrium distribution with respect to $W$, then from \eqref{eq:trans1} we must have
\begin{equation}\label{eq:CharFn2}
\frac{1+[a\mathrm{i}t-1]G_N(\phi_Y(t))}{\sigma^2-a^2}=\frac{G_M(\phi_Y(t))\left\{1+[\mathrm{i}t\mathbb{E}[Y]-1]\phi_Y(t)\right\}}{\mathrm{Var}(Y)-(\mathbb{E}[Y])^2},
\end{equation}
for all $t$, where as usual we write $a=\mathbb{E}[W]$ and $\sigma^2=\mathrm{Var}(W)$. We have that $\mathbb{E}[Y]=a/\mathbb{E}[N]$ and, again using the conditional variance formula,
\[
\sigma^2=\mathrm{Var}(Y)\mathbb{E}[N]+\mathrm{Var(N)}(\mathbb{E}[Y])^2=\mathrm{Var}(Y)\mathbb{E}[N]+\mathrm{Var}(N)\left(\frac{a}{\mathbb{E}[N]}\right)^2,
\]
so that
\[
\mathrm{Var}(Y)=\frac{\sigma^2(\mathbb{E}[N])^2-a^2\mathrm{Var}(N)}{(\mathbb{E}[N])^3},
\]
and \eqref{eq:CharFn2} becomes
\[
\frac{1+[a\mathrm{i}t-1]G_N(\phi_Y(t))}{\sigma^2-a^2}=\frac{G_M(\phi_Y(t))(\mathbb{E}[N])^3}{\sigma^2(\mathbb{E}[N])^2-a^2\mathrm{Var}(N)-a^2\mathbb{E}[N]}\left(1+\left[\frac{a\mathrm{i}t}{\mathbb{E}[N]}-1\right]\phi_Y(t)\right).
\]
That is,
\[
G_M(\phi_Y(t))=\frac{(\sigma^2(\mathbb{E}[N])^2-a^2\mathrm{Var}(N)-a^2\mathbb{E}[N])(1+[a\mathrm{i}t-1]G_N(\phi_Y(t)))}{(\sigma^2-a^2)\mathbb{E}([N])^3\left(1+\left[\frac{a\mathrm{i}t}{\mathbb{E}[N]}-1\right]\phi_Y(t)\right)}.
\]
In order for this to be a valid generating function, the only dependence on $t$ should be through the argument $\phi_Y(t)$ on the left-hand side. That is,
\[
\frac{\partial}{\partial t}\left(\frac{1+[a\mathrm{i}t-1]G_N(z)}{1+\left[\frac{a\mathrm{i}t}{\mathbb{E}[N]}-1\right]z}\right)=0.
\]
Differentiating, we have that this holds if and only if either $a=0$ or
\[
\left(1-z+\frac{z}{\mathbb{E}[N]}\right)G_N(z)=\frac{z}{\mathbb{E}[N]}.
\]
Letting $\mathbb{E}[N]=1/p$, this is equivalent to $G_N(z)=pz/(1-(1-p)z)$, which characterises the geometric distribution. That is, our representation $W^A=\sum_{j=1}^MY_j+Y^A$ holds only if either $N$ is geometric (in which case $M=N-1$) or $a=0$.
\end{remark}


We conclude this section by exploring the relationship between the centered and asymmetric equilibrium transformations, and with the zero bias transformation first defined by \cite{gr97} and which has been extensively used in normal approximation; see \cite{chen} for a discussion of many applications. This will allow us to find further situations beyond the random sum case in which we may explicitly construct $W^A$ and which will be useful for the applications in Section \ref{sec4} below. For a mean-zero random variable $W$, we recall that the random variable $W^z$ has the $W$-zero-biased distribution if
\[
\mathbb{E}[g'(W^z)]=\frac{\mathbb{E}[Wg(W)]}{\mathrm{Var}(W)}\] 
for all functions $g:\mathbb{R}\to\mathbb{R}$ for which the expectation on the right-hand side exists. We will further write $W^L$ for a random variable having the centered equilibrium transformation with respect to $W$. That is, a random variable whose characteristic function satisfies
\begin{equation}\label{eq:trans2}
\phi_{W^L}(t)=\frac{2(1-\phi_W(t))}{t^2\mathrm{Var}(W)},
\end{equation}
using the $\mathbb{E}[W]=0$ case of our \eqref{eq:trans1}. We recall from Theorem 3.2 of \cite{pike} that for a mean-zero random variable $W$ we have $W^L=_dBW^z$, where $B\sim\mathrm{Beta}(2,1)$ has a beta distribution with density function $2x$ for $x\in(0,1)$, independent of $W^z$.
This construction of $W^L$ can also easily be seen using characteristic functions: it is easily checked that $\phi_{W^z}(t)=-\phi^\prime_W(t)/(t\mathrm{Var}(W))$, and so
\[
\phi_{BW^z}(t)=2\int_0^1u\phi_{W^z}(tu)\,\mathrm{d}u=\frac{-2}{t\mathrm{Var}(W)}\int_0^1\phi^\prime_W(tu)\,\mathrm{d}u=\frac{2(1-\phi_W(t))}{t^2\mathrm{Var}(W)},
\]
as required.

Returning to the general case where $\mathbb{E}[W]$ is not necessarily zero, let us suppose that there exists a random variable $\xi$ such that we may write $W=X+\xi$, where $X\sim\mathrm{Exp}(\mathbb{E}[W]^{-1})$ has an exponential distribution with the same mean as $W$ and is independent of $\xi$. We clearly have that $\mathbb{E}[\xi]=0$ and
\[
\mathrm{Var}(\xi)=\mathrm{Var}(W)-\mathrm{Var}(Z)=\mathrm{Var}(W)-(\mathbb{E}[W])^2.
\]
It is thus necessary that $\mathrm{Var}(W)\geq(\mathbb{E}[W])^2$ for the existence of such a random variable $\xi$. Then since
\[
\phi_W(t)=\phi_X(t)\phi_\xi(t)=\frac{\phi_\xi(t)}{1-\mathrm{i}t\mathbb{E}[W]},
\]
we have $\phi_\xi(t)=(1-\mathrm{i}t\mathbb{E}[W])\phi_W(t)$, and it follows from \eqref{eq:trans1} and \eqref{eq:trans2} that we may write $W^A=_d\xi^L$. In particular, we have $W^A=_dB\xi^z$, where $B\sim\mathrm{Beta}(2,1)$ is independent of $\xi^z$.

We illustrate this discussion with three examples, the third of which we will also use as the basis for the application considered in Section \ref{sec4.0}.
\begin{example}
Let $X\sim\mathrm{Exp}(\lambda)$ have an exponential distribution with mean $\lambda^{-1}$. Then $\xi$ is a point mass at zero. Since this has zero variance, its zero-biased version is not defined. Similarly, since $\mathrm{Var}(X)-(\mathbb{E}[X])^2=0$, the asymmetric equilibrium version $X^A$ is also undefined.   
\end{example}

\begin{example}
Let $W$ be exponential data perturbed by the addition of independent normal noise. That is, $W=X+N$, where $X\sim\mathrm{Exp}(\lambda)$ and $N\sim\mathrm{N}(0,\tau^2)$ are independent. Then from the above, and noting that the mean-zero normal is a fixed point of the zero-bias transformation (see Lemma 2.1(i) of \cite{gr97}), we have $W^A=_dN^L=_dBN^z=_dBN$, where $B\sim\mathrm{Beta}(2,1)$ is independent of $N$.
\end{example}

\begin{example}
Let $Z\sim\mathrm{AL}(0,a,b)$ have an asymmetric Laplace distribution. Then from \eqref{nvm} we may write $Z=_daX+\sqrt{X}N_b$, where $X\sim\mathrm{Exp}(1)$ and $N_b\sim\mathrm{N}(0,b^2)$ are independent. Since $aX\sim\mathrm{Exp}(\mathbb{E}[Z]^{-1})$ and $Z$ is a fixed point of our asymmetric equilibrium transformation, it follows from the discussion above that we may write 
\begin{equation}\label{eq:zerobias}
Z=_dZ^A=_d\left(\sqrt{X}N_b\right)^L=_dB\left(\sqrt{X}N_b\right)^z,
\end{equation}
where $B\sim\mathrm{Beta}(2,1)$ is independent of all else.
\end{example}

\subsection{General bounds using the asymmetric equilibrium transformation}\label{sec3.3}

In the following theorem, we provide general bounds for asymmetric Laplace approximation in terms of the asymmetric equilibrium transformation. 

\begin{theorem}\label{thm3.5}
	Let $W$ be a random variable with mean $a\in\mathbb{R}$ and variance $\sigma^2\in(0,\infty)$, and suppose that $\sigma^2>a^2$. Let $W^A$ have the $W$-asymmetric equilibrium distribution and let $Z\sim\mathrm{AL}(0,a,\sqrt{\sigma^2-a^2})$. 
    
\vspace{3mm}

\noindent (i) For any $\beta>0$,
	\begin{align}
		d_\mathrm{K}(W,Z)&\leq \bigg(\frac{14}{\sqrt{2\sigma^2-a^2}}+\frac{7 |a|}{\sigma^2-a^2}\bigg)\beta+\bigg(5+\frac{7|a|}{\sqrt{2\sigma^2-a^2}}\bigg)\mathbb{P}(|W-W^A|>\beta)\label{3.51},	 \\
		\label{3.52}		d_\mathrm{K}(W^A,Z)&\leq \bigg(\frac{2}{\sqrt{2\sigma^2-a^2}}+\frac{4|a|}{\sigma^2-a^2}\bigg)\beta+\bigg(2+\frac{4|a|}{\sqrt{2\sigma^2-a^2}}\bigg)\mathbb{P}\big(|W-W^A|>\beta\big).
	\end{align}
 Further assume that $\mathbb{E}[|W|^{k+2}]<\infty$ for some $k\geq1$. Then 
	\begin{align}
		d_\mathrm{K}(W,Z)&\leq	
		\bigg(\frac{14}{\sqrt{2\sigma^2-a^2}}+\frac{7 |a|}{\sigma^2-a^2}\bigg)^\frac{k}{k+1}\nonumber\\
		&\quad\times\bigg\{(k+k^{-k})\bigg(5+\frac{7|a|}{\sqrt{2\sigma^2-a^2}}\bigg)\mathbb{E}\big[|W-W^A|^k\big]\bigg\}^\frac{1}{k+1}.\label{kolb}
	\end{align}  
\noindent (ii) Suppose now that $\mathbb{E}\big[|W|^3\big]<\infty$. Then
	\begin{align}
		\label{3.53}		d_\mathrm{W}(W,Z)&\leq 2\mathbb{E}|W-W^A|,\\
		\label{3.54}		d_\mathrm{W}(W^A,Z)&\leq \bigg(1+\frac{2}{\sqrt{2\sigma^2-a^2}}\bigg)\mathbb{E}|W-W^A|,\\
		\label{3.55}		d_\mathrm{K}(W^A,Z)&\leq\bigg(\frac{2}{\sqrt{2\sigma^2-a^2}}+\frac{4}{\sigma^2-a^2}\bigg)\mathbb{E}|W-W^A|.
	\end{align}
\noindent (iii) Suppose that $\mathbb{E}[W^4]<\infty.$ Then
	\begin{align}
		d_2(W,Z)\leq\frac{\sigma^2-a^2}{\sqrt{2\sigma^2-a^2}}\mathbb{E}\big[|\mathbb{E}[W-W^A\,|\,W
		]|\big]+\mathbb{E}\big[(W-W^A)^2\big].\label{3.56}
	\end{align}
\end{theorem}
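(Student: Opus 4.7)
The plan is to exploit the Stein equation \eqref{ivp} from Lemma \ref{lem2.2} together with the defining relation \eqref{ce1} for the asymmetric equilibrium distribution to reduce every bound in the theorem to a single master identity. For any test function $h$ and $f_h$ the solution to \eqref{ivp} with $b=\sqrt{\sigma^2-a^2}$, the regularity estimates \eqref{kolstein} and \eqref{2.6} ensure $f_h$ is admissible in \eqref{ce1}. Using the transfer principle and substituting \eqref{ce1} for $\mathbb{E}[f_h(W)-f_h(0)-af_h'(W)]$ (noting $f_h(0)=0$) gives
\[
\mathbb{E}[h(W)]-\mathbb{E}[h(Z)]=\frac{\sigma^2-a^2}{2}\bigl(\mathbb{E}[f_h''(W)]-\mathbb{E}[f_h''(W^A)]\bigr).
\]
Every bound in the theorem then follows by estimating the right-hand side under different assumptions on $h$ via the regularity bounds from Lemma \ref{lem2.2}.

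For the smooth Wasserstein bound \eqref{3.56}, which I regard as the heart of the theorem, I take $h\in\mathcal{H}_2$ (so $\|h''\|\leq 1$) and apply \eqref{2.6} with $k=1$ to obtain $\|f_h'''\|\leq 2/\sqrt{2\sigma^2-a^2}$ and $\|f_h^{(4)}\|\leq 4/(\sigma^2-a^2)$. A first-order Taylor expansion of $f_h''$ around $W$ yields
\[
f_h''(W^A)-f_h''(W)=f_h'''(W)(W^A-W)+R,\qquad |R|\leq\tfrac{1}{2}\|f_h^{(4)}\|(W-W^A)^2.
\]
Taking expectations and using that $f_h'''(W)$ is $\sigma(W)$-measurable to pull the conditional expectation $\mathbb{E}[W-W^A\mid W]$ out of the linear term gives
\[
|\mathbb{E}[f_h''(W^A)-f_h''(W)]|\leq\|f_h'''\|\,\mathbb{E}|\mathbb{E}[W-W^A\mid W]|+\tfrac{1}{2}\|f_h^{(4)}\|\,\mathbb{E}[(W-W^A)^2].
\]
Multiplying by $(\sigma^2-a^2)/2$ and inserting the derivative bounds converts the two prefactors into exactly $(\sigma^2-a^2)/\sqrt{2\sigma^2-a^2}$ and $1$, which is \eqref{3.56}.

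The Wasserstein bound \eqref{3.53} follows from the same master identity using the cruder zeroth-order estimate $|\mathbb{E}[f_h''(W^A)-f_h''(W)]|\leq\|f_h'''\|\mathbb{E}|W-W^A|$ and the bound $\|f_h'''\|\leq 4/(\sigma^2-a^2)$ obtained from \eqref{2.6} with $k=0$ when $\|h'\|\leq 1$; the factor $(\sigma^2-a^2)/2$ out front then produces the clean constant $2$. The partner bound \eqref{3.54} comes from the variant identity obtained by applying the Stein equation at $W^A$ and substituting \eqref{ce1} for $\mathbb{E}[f_h''(W^A)]$, which yields $\mathbb{E}[h(W^A)]-\mathbb{E}[h(Z)]=\mathbb{E}[f_h(W)-f_h(W^A)]-a\mathbb{E}[f_h'(W)-f_h'(W^A)]$; the Lipschitz property of $f_h$ and $f_h'$, together with $\|f_h'\|\leq 1$ and $\|f_h''\|\leq 2/\sqrt{2\sigma^2-a^2}$ from \eqref{2.6} at $k=0$, then delivers the bound.

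For the Kolmogorov bounds \eqref{3.51}, \eqref{3.52} and \eqref{3.55}, the indicator test function $h_z=\mathbb{I}_{\cdot\leq z}$ must first be smoothed to a piecewise linear approximation $h_z^\beta$ that agrees with $h_z$ outside a window of width $\beta$. Applying the Wasserstein-type argument to $h_z^\beta$ produces terms of order $\beta^{-1}$, while controlling the smoothing error via the uniform density bound $\|f_Z\|_\infty\leq 1/\sqrt{2b^2+a^2}$ read off from \eqref{alpdf}, combined with the decomposition $\{|W-W^A|\leq\beta\}\cup\{|W-W^A|>\beta\}$, produces the probability term $\mathbb{P}(|W-W^A|>\beta)$. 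The moment-based bound \eqref{kolb} then comes from applying Markov's inequality to $\mathbb{E}|W-W^A|^k$ in \eqref{3.51} and optimising over $\beta$. I expect the main obstacle to lie here: carefully tracking the explicit constants $5+7|a|/\sqrt{2\sigma^2-a^2}$ and their counterparts through the smoothing is delicate because the asymmetry in the density of $Z$ forces the two boundary terms of the smoothing window to be handled separately, and the dependence on $a$ in the prefactors must line up exactly. By contrast, \eqref{3.56} is essentially forced once the correct Taylor expansion and the $k=1$ regularity bounds of Lemma \ref{lem2.2} are in place.
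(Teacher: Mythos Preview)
Your treatment of parts (ii) and (iii) is correct and matches the paper's proof essentially line by line: the master identity, the mean-value estimate for \eqref{3.53}, the variant identity for $W^A$ yielding \eqref{3.54} and \eqref{3.55}, and the first-order Taylor expansion with the conditioning trick for \eqref{3.56}. Likewise, your argument for \eqref{3.52} is right and coincides with the paper's.

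There is, however, a genuine gap in your plan for \eqref{3.51}. The bound has the form $C_1\beta + C_2\,\mathbb{P}(|W-W^A|>\beta)$ with $\beta$ appearing to the \emph{first} power, and this cannot be obtained by smoothing the indicator on a window and then applying a Wasserstein-type estimate as you describe. If you smooth at scale $\epsilon$ and split on $\{|W-W^A|\le\beta\}$, the near set contributes $\|f_h^{(3)}\|\cdot\beta$ where $\|f_h^{(3)}\|\le 4\|h'\|/(\sigma^2-a^2)=O(\epsilon^{-1})$, giving $O(\beta/\epsilon)$; combined with the smoothing error $O(\epsilon)$ the best you can achieve is $O(\sqrt{\beta})$, not $O(\beta)$. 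The paper avoids this loss by a different mechanism: on the near set it does not bound $\|f^{(3)}\|$ directly but differentiates the Stein equation to write $\tfrac{\sigma^2-a^2}{2}f^{(3)}=f'-af''+h_{c,\epsilon}'$. The pieces $f'$ and $f''$ are controlled by the \emph{bounded}-$h$ estimates \eqref{kolstein} (independent of $\epsilon$), while the derivative $h_{c,\epsilon}'=-\epsilon^{-1}\mathbb{I}_{[c-\epsilon,c]}$ leads to a term $\epsilon^{-1}\int_{-\beta}^0\mathbb{P}(c-\epsilon\le W+t\le c)\,\mathrm{d}t$, which is bounded via a concentration inequality that itself involves $\kappa=d_\mathrm{K}(W,Z)$. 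This produces a recursive inequality $\kappa\le C_1\beta + C_2\,\mathbb{P}(|\Delta|>\beta) + (2/\eta)\kappa$ after setting $\epsilon=\eta\beta$; solving for $\kappa$ and taking $\eta=5$ yields the constants $14$, $7$, $5$, $7$ in \eqref{3.51}. Your anticipated obstacle (asymmetry forcing separate boundary treatment) is not what drives the constants; the bootstrap step is.
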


\begin{proof} (i) For $c\in\mathbb{R}$ and $\epsilon\geq0$, let $f_{c,\epsilon}$ be the solution (\ref{sol}) to the $\mathrm{AL}(0,a,\sqrt{\sigma^2-a^2})$ Stein equation with test function $h_{c,\epsilon}$, which is defined by $h_{c,\epsilon}(x):=\epsilon^{-1}\int_{0}^{\epsilon}\mathbb{I}(x+s\leq c)\,\mathrm{d}s$, for $\epsilon>0$, and  $h_{c,0}(x)=\mathbb{I}(x\leq c)$. From the estimates of Lemma \ref{lem2.2} we have the bounds
	\begin{align}
		\label{fcb1}	\|f_{c,\epsilon}\|&\leq1,\\
		\label{fcb2}	\|f'_{c,\epsilon}\|&\leq \frac{2}{\sqrt{2\sigma^2-a^2}},\\
		\label{fcb3}	\|f''_{c,\epsilon}\|&\leq\frac{4}{\sigma^2-a^2}.
	\end{align}
To simplify notation, we will write $f:=f_{c,\epsilon}$. We also define $\Delta:=W-W^A$ and $I_1:=\mathbb{I}(|\Delta|<\beta)$.     

By the asymmetric Laplace Stein equation (\ref{ivp}) (with $b=\sqrt{\sigma^2-a^2}$ and using also that $f(0)=0$) we have 
\begin{align*}
	&\mathbb{E}[h_{c,\epsilon}(W)]-\mathbb{E}[h_{c,\epsilon}(Z)]\\
	&\quad=\mathbb{E}\bigg[\frac{\sigma^2-a^2}{2}f''(W)+af'(W)-f(W)\bigg]\\
	&\quad=\frac{\sigma^2-a^2}{2}\mathbb{E}[I_1\big(f''(W)-f''(W^A)\big)]+\frac{\sigma^2-a^2}{2}\mathbb{E}[(1-I_1)\big(f''(W)-f''(W^A)\big)]\\
	&\quad=:J_1+J_2.
\end{align*}
We note here that the expectation $\mathbb{E}[f''(W^A)]$ is well defined, due to the bound $(\ref{fcb3})$.

Letting $\tilde{h}(x)=h(x)-\mathbb{E}[h(Z)]$ for $Z\sim\mathrm{AL}(0,a,b)$, we will begin by bounding $|J_2|$:
\begin{align*}
	|J_2|&=\big|\mathbb{E}\big[(1-I_1)\big(f(W)-f(W^A)-a(f'(W)-f'(W^A))+\tilde{h}_{c,\epsilon}(W)-\tilde{h}_{c,\epsilon}(W^A)\big)\big]\big|\\
	&\leq\big(2\|f\|+2|a|\|f'\|+1\big)\mathbb{P}(|\Delta|>\beta)\leq\bigg(3+\frac{4|a|}{\sqrt{2\sigma^2-a^2}}\bigg)\mathbb{P}(|\Delta|>\beta),
\end{align*}
where we used the bounds (\ref{fcb1}) and (\ref{fcb2}) to obtain the last inequality. 

To bound $J_1$, we will make use of the following two inequalities, whose simple proofs we omit. Firstly, for $-\infty<\alpha<\gamma<\infty$, we have the bound
\begin{equation}
	\mathbb{P}(\alpha\leq W\leq \gamma)\leq\frac{\gamma-\alpha}{\sqrt{2\sigma^2-a^2}}+2\kappa,\label{epf1}
\end{equation}
where $\kappa:=d_\mathrm{K}(W,Z)$. Secondly, 
for any $\epsilon>0$,
\begin{equation}
	\kappa=d_\mathrm{K}(W,Z)\leq\frac{\epsilon}{\sqrt{2\sigma^2-a^2}}+\sup_{c\in\mathbb{R}}\big|\mathbb{E}[h_{c,\epsilon}(W)]-\mathbb{E}[h_{c,\epsilon}(Z)]\big|.\label{epf2}
\end{equation}

By differentiating both sides of the Stein equation (\ref{ivp}) with test function $h_{c,\epsilon}$, we have that 
\begin{align*}
	|J_1|&=\bigg|\mathbb{E}\bigg[I_1\int_{0}^{-\Delta}\frac{\sigma^2-a^2}{2}f^{(3)}(W+t)\,\mathrm{d}t\bigg]\bigg|\\
	&=\bigg|\mathbb{E}\bigg[I_1\int_{0}^{-\Delta}\big\{f'(W+t)-af''(W+t)-\frac{1}{\epsilon}\mathbb{I}(c-\epsilon\leq W+t\leq c)\big\}\,\mathrm{d}t\bigg]\bigg|\\
	&\leq\|f'\|\mathbb{E}|I_1\Delta|+|a|\|f''\|\mathbb{E}|I_1\Delta|+\frac{1}{\epsilon}\int_{-\beta}^{0}\mathbb{P}(c-\epsilon\leq W+t\leq c)\,\mathrm{d}t\\
	&\leq\frac{3\beta}{\sqrt{2\sigma^2-a^2}}+\frac{4\beta |a|}{\sigma^2-a^2}+\frac{2\beta\kappa}{\epsilon},
\end{align*}
where we obtained the final inequality by using inequalities (\ref{fcb2}), (\ref{fcb3}) and (\ref{epf1}). Now, applying inequality (\ref{epf2}) and taking $\epsilon=\eta\beta$ for some $\eta>2$ yields
\begin{align*}
	\kappa&\leq\bigg(3+\frac{4|a|}{\sqrt{2\sigma^2-a^2}}\bigg)\mathbb{P}(|\Delta|>\beta)+\frac{3\beta+\epsilon}{\sqrt{2\sigma^2-a^2}}+\frac{4\beta |a|}{\sigma^2-a^2}+\frac{2\beta\kappa}{\epsilon}\\
	&=\bigg(3+\frac{4|a|}{\sqrt{2\sigma^2-a^2}}\bigg)\mathbb{P}(|\Delta|>\beta)+\frac{(3+\eta)\beta}{\sqrt{2\sigma^2-a^2}}+\frac{4\beta |a|}{\sigma^2-a^2}+\frac{2\kappa}{\eta},
\end{align*}
and a rearrangement gives the bound
\begin{equation}
	\kappa\leq\frac{\eta}{\eta-2}\bigg\{\bigg(3+\frac{4|a|}{\sqrt{2\sigma^2-a^2}}\bigg)\mathbb{P}(|\Delta|>\beta)+\frac{4\beta |a|}{\sigma^2-a^2}\bigg\}+\frac{(3\eta+\eta^2)\beta}{\sqrt{2\sigma^2-a^2}(\eta-2)}.\label{kap}
\end{equation}
Setting $\eta=5$ in inequality (\ref{kap}) and rounding constants up to the nearest integer yields the bound (\ref{3.51}).

We now derive inequality (\ref{3.52}). Let $h\in\mathcal{H}_{\mathrm{K}}$ and let $f_h$ be the solution (\ref{sol}) to the $\mathrm{AL}(0,a,\sqrt{\sigma^2-a^2})$ Stein equation with test function $h$. Arguing similarly to before we get that, for any $h\in\mathcal{H}_{\mathrm{K}}$,
\begin{align*}
|\mathbb{E}[h(W^A)]-\mathbb{E}[h(Z)]|
	&=\bigg|\mathbb{E}\bigg[\frac{\sigma^2-a^2}{2}f_h''(W^A)+af_h'(W^A)-f_h(W^A)\bigg]
	\bigg|\\
    \quad&=\big|\mathbb{E}\big[f_h(W)-f_h(W^A)-a\big(f_h'(W)-f_h'(W^A)\big)\big]\big|\\
	\quad&\leq\big|\mathbb{E}\big[I_1\big(f_h(W)-f_h(W^A)-a(f_h'(W)-f_h'(W^A))\big)\big]\big|\\
	&\quad\quad+\big|\mathbb{E}\big[(1-I_1)\big(f_h(W)-f_h(W^A)-a(f_h'(W)-f_h'(W^A))\big)\big]\big| \\
    \quad&\leq\big(\|f_h'\|+|a|\|f_h''\|\big)\mathbb{E}|I_1\Delta|+\big(2\|f_h\|+2|a|\|f_h'\|\big)\mathbb{P}(|\Delta|>\beta)\\
	\quad&\leq\bigg(\frac{2}{\sqrt{2\sigma^2-a^2}}+\frac{4|a|}{\sigma^2-a^2}\bigg)\beta+\bigg(2+\frac{4|a|}{\sqrt{2\sigma^2-a^2}}\bigg)\mathbb{P}(|\Delta|>\beta),
\end{align*}
where in obtaining the last two inequalities we used the mean value theorem and the bounds of (\ref{kolstein}) for $\|f_h\|$, $\|f_h''\|$ and $\|f_h''\|$ (with $\|\tilde{h}\|\leq 1$ and $b=\sqrt{\sigma^2-a^2}$). We have thus proven inequality (\ref{3.52}).

Suppose now that $\mathbb{E}[|W|^{k+2}]<\infty$ for $k\geq1$. By an application of Markov's inequality to the bound (\ref{3.51}) we get that
	\begin{align}
		&d_\mathrm{K}(W,Z)\leq \bigg(\frac{14}{\sqrt{2\sigma^2-a^2}}+\frac{7 |a|}{\sigma^2-a^2}\bigg)\beta+\bigg(5+\frac{7|a|}{\sqrt{2\sigma^2-a^2}}\bigg)\frac{\mathbb{E}[|W-W^A|^k]}{\beta^{k}}.\label{beq}
	\end{align}
	The bound (\ref{beq}) is optimised by taking 
	\begin{align*}
		\beta=\bigg(k\bigg(5+\frac{7|a|}{\sqrt{2\sigma^2-a^2}}\bigg)\bigg(\frac{14}{\sqrt{2\sigma^2-a^2}}+\frac{7 |a|}{\sigma^2-a^2}\bigg)^{-1}\mathbb{E}\big[|W-W^A|^k\big]\bigg)^{\frac{1}{k+1}}.
	\end{align*}
	Substituting this choice of $\beta$ into (\ref{beq}) yields inequality (\ref{kolb}).

\vspace{3mm}

\noindent (ii) We will now suppose that $\mathbb{E}[|W|^3]<\infty$. Observe that this assumption ensures that $\mathbb{E}|W^A|<\infty$ due to the absolute moment relation (\ref{absm}). Then, for $h\in\mathcal{H}_\mathrm{W}$,
\begin{align*}
	|\mathbb{E}[h(W)]-\mathbb{E}[h(Z)]|&=\bigg|\mathbb{E}\bigg[\frac{\sigma^2-a^2}{2}f_h''(W)+af_h'(W)-f_h(W)\bigg]\bigg|\\
	&=\frac{\sigma^2-a^2}{2}\big|\mathbb{E}\big[f_h''(W)-f_h''(W^A)\big]\big|\\
	&\leq\frac{\sigma^2-a^2}{2}\|f_h^{(3)}\|\mathbb{E}|W-W^A|\\
	&\leq2\mathbb{E}|W-W^A|,
\end{align*}
where in the last step we applied the bound $\|f_h^{(3)}\|\leq4\|h'\|/({\sigma^2-a^2})\leq4/({\sigma^2-a^2})$ of Lemma \ref{lem2.2} (since $\|h'\|\leq 1$ for $h\in\mathcal{H}_\mathrm{W}$). We have thus proven inequality (\ref{3.53}). 

We now prove inequality (\ref{3.54}). For $h\in\mathcal{H}_\mathrm{W}$,
\begin{align}
	|\mathbb{E}[h(W)]-\mathbb{E}[h(Z)]|&=\bigg|\mathbb{E}\bigg[\frac{\sigma^2-a^2}{2}f_h''(W^A)+af_h'(W^A)-f_h(W^A)\bigg]\bigg|\nonumber\\
    &=\big|\mathbb{E}\big[f_h(W)-f_h(W^A)-a(f_h'(W)-f_h'(W^A))\big]\big|\nonumber\\
	&\leq\|f_h'\|\mathbb{E}|W-W^A|+|a|\|f_h''\|\mathbb{E}|W-W^A|\label{b1}.
\end{align}
Applying the bounds $\|f_h'\|\leq\|h'\|\leq1$ and $\|f_h''\|\leq2\|h'\|/\sqrt{2\sigma^2-a^2}\leq 2
/\sqrt{2\sigma^2-a^2}$ of Lemma \ref{lem2.2} to inequality (\ref{b1}) yields inequality (\ref{3.54}). We obtain inequality (\ref{3.55}) similarly, this time taking $h\in\mathcal{H}_\mathrm{K}$ and applying the bounds $\|f_h'\|\leq2\|\tilde{h}\|/\sqrt{2\sigma^2-a^2}\leq 2/\sqrt{2\sigma^2-a^2}$ and $\|f_h''\|\leq4\|\tilde{h}\|/(\sigma^2-a^2)\leq4/(\sigma^2-a^2)$ to inequality (\ref{b1}).

\vspace{3mm}

\noindent (iii) Let $h\in\mathcal{H}_2$ and assume that $\mathbb{E}[W^4]<\infty$ (which guarantees that $\mathbb{E}[(W^A)^2]<\infty$ by the moment relation (\ref{mr})). Taylor expanding gives that 
\begin{align}
	|\mathbb{E}[h(W)]-\mathbb{E}[h(Z)]|&=\bigg|\mathbb{E}\bigg[\frac{\sigma^2-a^2}{2}f_h''(W)+af_h'(W)-f_h(W)\bigg]\bigg|\nonumber\\
	&=\frac{\sigma^2-a^2}{2}\big|\mathbb{E}\big[f_h''(W)-f_h''(W^A)\big]\big|\nonumber\\
	&\leq\frac{\sigma^2-a^2}{2}\big|\mathbb{E}\big[f_h^{(3)}(W)(W-W^A)\big]\big|+\frac{\sigma^2-a^2}{4}\|f_h^{(4)}\|\mathbb{E}\big[(W-W^A)^2\big]\nonumber\\
	&\leq\frac{\sigma^2-a^2}{2}\big|\mathbb{E}\big[f_h^{(3)}(W)\mathbb{E}[W-W^A\,|\,W]\big]\big|+\frac{\sigma^2-a^2}{4}\|f_h^{(4)}\|\mathbb{E}\big[(W-W^A)^2\big]\nonumber\\
	&\leq\frac{\sigma^2-a^2}{2}\|f_h^{(3)}\|\mathbb{E}\big[|\mathbb{E}[W-W^A\,|\,W]|\big]+\frac{\sigma^2-a^2}{4}\|f_h^{(4)}\|\mathbb{E}\big[(W-W^A)^2\big].\nonumber
\end{align}
Inequality (\ref{3.56}) now follows from the bounds $\|f_h^{(3)}\|\leq2\|h''\|/\sqrt{2\sigma^2-a^2}\leq2/\sqrt{2\sigma^2-a^2}$ and $\|f_h^{(4)}\|\leq4\|h''\|/(\sigma^2-a^2)\leq4/(\sigma^2-a^2)$ from Lemma $\ref{lem2.2}$.
\end{proof}

\section{Asymmetric Laplace approximation}\label{sec4}

In this section, we employ our preceding constructions to establish explicit approximation bounds in a number of settings where the asymmetric Laplace appears as a limiting distribution. We begin with an
application to a perturbed asymmetric Laplace distribution in Section \ref{sec4.0}, before moving on to consider geometric random sums in Section \ref{sec4.1} and sums of independent random variables with random standardisation in Section \ref{sec4.2}.

\subsection{The normal-asymmetric Laplace distribution and its relatives}\label{sec4.0}

Consider asymmetric Laplace data perturbed by the addition of independent, mean-zero noise $\eta$ with variance $\tau^2$. In the special case where $\eta$ has a normal distribution, we obtain the normal-asymmetric Laplace distribution introduced by \cite{reed}. In the following theorem, we provide general bounds without any distributional assumptions on $\eta$, and in the case $\eta\sim \mathrm{N}(0,\tau^2)$ we obtain improved bounds, in terms of a better rate of convergence with respect to $\tau$ for the Kolmogorov distance bound and a better constant for the Wasserstein distance bound.
\begin{theorem}\label{thm:perturb}
(i) Let $Z^\prime\sim\mathrm{AL}(0,a,b)$ and $W=Z^\prime+\eta$ with $\eta$ as above. Then
\begin{align}
d_\mathrm{K}(W,Z)&\leq1.05\left(\frac{b^2c^2d\tau^2}{b^2+\tau^2}\right)^{1/3}+\frac{d\tau^2}{b^2+\tau^2},\label{4.1k1} \\
d_\mathrm{W}(W,Z)&\leq \tau+\frac{\tau^2}{\sqrt{2b^2+a^2}},\label{4.1w1}
\end{align}
where $Z\sim\mathrm{AL}(0,a,\sqrt{b^2+\tau^2})$, and
\[c=\frac{14}{\sqrt{a^2+2b^2+2\tau^2}}+\frac{7|a|}{b^2+\tau^2}, \quad d=5+\frac{7|a|}{\sqrt{a^2+2b^2+2\tau^2}}.\] 
(ii) Suppose further that for some $n\geq2$ there exists $C>0$ such that 
\begin{equation}\label{eq:tails}
\mathbb{P}\left(|\eta|>s\right)\leq\frac{C\tau^n}{s^n}
\end{equation}
for all $s>0$. Then the order $\tau^{2/3}$ rate in the Kolmogorov distance bound (\ref{4.1k1}) can be improved to order $\tau^{n/(n+1)}$:
\begin{equation}
d_\mathrm{K}(W,Z)\leq
2\left(\frac{2b^2c^nCd\tau^n}{(b^2+\tau^2)(n^2+3n+2)}\right)^{1/(n+1)}+\frac{d\tau^2}{b^2+\tau^2}. \label{4.1k2}
\end{equation}
(iii) Let $0<\tau<1$ and suppose now that $\eta\sim\mathrm{N}(0,\tau^2)$, so that $W$ follows the normal-asymmetric Laplace distribution. Then 
\begin{align}
d_\mathrm{K}(W,Z)&\leq \bigg(2c+\frac{b^2d}{2\sqrt{2\pi}(b^2+\tau^2)\log(1/\tau)}\bigg)\tau\sqrt{\log(1/\tau)}+\frac{d\tau^2}{b^2+\tau^2}, \label{4.1k3}\\
d_\mathrm{W}(W,Z)&\leq \sqrt{\frac{2}{\pi}}\tau+\frac{\tau^2}{\sqrt{2b^2+a^2}}.    \label{dddw}
\end{align}
\end{theorem}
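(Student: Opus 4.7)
The plan is to combine a direct Stein calculation (for the Wasserstein bounds) with an application of Theorem~\ref{thm3.5} (for the Kolmogorov bounds), the latter requiring an explicit construction of the asymmetric equilibrium version $W^A$ of $W$.

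Let $f=f_h$ denote the solution to the Stein equation (\ref{ivp}) for $\mathrm{AL}(0,a,\sqrt{b^2+\tau^2})$ with test function $h$. Splitting $\tfrac{b^2+\tau^2}{2}=\tfrac{b^2}{2}+\tfrac{\tau^2}{2}$, conditioning on $\eta$, and applying the Stein characterization of Lemma~\ref{lem2.1} to $g_\eta(x)=f(x+\eta)$ (so that $g_\eta(0)=f(\eta)$), using the independence of $Z'\sim\mathrm{AL}(0,a,b)$ from $\eta$, yields the clean identity
\[
\mathbb{E}[h(W)]-\mathbb{E}[h(Z)] \;=\; \tfrac{\tau^2}{2}\,\mathbb{E}[f''(W)] \;-\; \mathbb{E}[f(\eta)].
\]
The Wasserstein bounds (\ref{4.1w1}) and (\ref{dddw}) follow at once: for $h\in\mathcal{H}_\mathrm{W}$, Lemma~\ref{lem2.2} gives $\|f'\|\leq 1$ and $\|f''\|\leq 2/\sqrt{2b^2+a^2}$, while $|f(\eta)|\leq\|f'\||\eta|\leq|\eta|$. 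Using Cauchy--Schwarz, $\mathbb{E}|\eta|\leq\tau$, delivers (\ref{4.1w1}); for normal $\eta$, the exact value $\mathbb{E}|\eta|=\sqrt{2/\pi}\,\tau$ delivers (\ref{dddw}).

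For the Kolmogorov bounds I will apply Theorem~\ref{thm3.5} using a mixture construction of $W^A$. A direct computation from the characteristic-function formula (\ref{eq:trans1}) factors
\[
\phi_{W^A}(t) \;=\; \frac{b^2}{b^2+\tau^2}\,\phi_{Z'}(t) \;+\; \frac{\tau^2}{b^2+\tau^2}\,\phi_V(t)\,\phi_{\eta^L}(t),
\]
where $\eta^L$ is the centered equilibrium transform of $\eta$ and $V$ is the random variable with characteristic function $(1-a\mathrm{i}t)\phi_{Z'}(t)$, identified explicitly in the last paragraph below. Hence $W^A=Z'$ with probability $b^2/(b^2+\tau^2)$ and $W^A=V+\eta^L$ (with $V,\eta^L$ independent) otherwise. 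Coupling so that $W-W^A=\eta$ on the first event, I plug into (\ref{kolb}) with $k=2$: the dominant contribution to $\mathbb{E}[(W-W^A)^2]$ is $b^2\tau^2/(b^2+\tau^2)$ from the first mixture component, yielding the $\tau^{2/3}$ rate in (\ref{4.1k1}). For parts (ii) and (iii) I return to (\ref{3.51}) with the tail estimate
\[
\mathbb{P}(|W-W^A|>\beta) \;\leq\; \frac{b^2}{b^2+\tau^2}\,\mathbb{P}(|\eta|>\beta) \;+\; \frac{\tau^2}{b^2+\tau^2},
\]
substituting the polynomial tail (\ref{eq:tails}) and optimising $\beta\sim\tau^{n/(n+1)}$ for part (ii), and using the Gaussian tail bound $\mathbb{P}(|\eta|>\beta)\leq 2\mathrm{e}^{-\beta^2/(2\tau^2)}$ with $\beta=\tau\sqrt{2\log(1/\tau)}$ for part (iii).

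The main obstacle will be verifying that $(1-a\mathrm{i}t)\phi_{Z'}(t)$ is a genuine characteristic function. Formal inverse Fourier gives $\delta_0+(b^2/2)f_{Z'}''$; the crucial observation is that the jump of $f_{Z'}'$ across the cusp at $x=0$ equals $-2/b^2$, so the delta contribution inside $(b^2/2)f_{Z'}''$ exactly cancels $\delta_0$. What remains is the valid density of a two-sided exponential mixture with rates $\lambda_\pm=(\sqrt{2b^2+a^2}\mp a)/b^2$, which identifies $V$ and supports the whole mixture construction.
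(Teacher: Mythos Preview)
Your Wasserstein argument via the identity $\mathbb{E}[h(W)]-\mathbb{E}[h(Z)]=\tfrac{\tau^2}{2}\mathbb{E}[f''(W)]-\mathbb{E}[f(\eta)]$ is correct and in fact cleaner than the paper's route, which splits via the triangle inequality $d_\mathrm{W}(W,Z)\le d_\mathrm{W}(W,Z')+d_\mathrm{W}(Z',Z)$, bounds the first piece by $\mathbb{E}|\eta|$, and handles the second by a comparison of Stein operators for the two asymmetric Laplace laws.

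For the Kolmogorov parts your construction of $W^A$ is genuinely different from the paper's. The paper realises $W^A$ through the zero-bias route of Section~\ref{sec3.2}: writing $W=_d aX+\sqrt{X}N_b+\eta$ gives $W^A=_dB(\sqrt{X}N_b+\eta)^z$, and the Goldstein--Reinert mixture for the zero bias of an independent sum then yields the coupling $W-W^A=_d(1-I)W+(1-B)I\eta-B(1-I)(\sqrt{X}N_b+\eta^z)$ with $I$ Bernoulli of mean $b^2/(b^2+\tau^2)$. They then bound $\mathbb{P}(|W-W^A|>\beta)\le\frac{b^2}{b^2+\tau^2}\mathbb{P}((1-B)|\eta|>\beta)+\frac{\tau^2}{b^2+\tau^2}$ and apply Chebyshev (part~(i)), the polynomial tail (part~(ii)), or Mill's ratio (part~(iii)) inside (\ref{3.51}). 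Your characteristic-function mixture is a valid alternative; indeed the same calculation gives the even simpler factorisation $\phi_{W^A}(t)=\frac{b^2}{b^2+\tau^2}\phi_W(t)+\frac{\tau^2}{b^2+\tau^2}\phi_{\eta^L}(t)$, which removes the need for the auxiliary variable $V$ entirely.

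There is, however, a genuine gap in your plan for part~(i). Applying (\ref{kolb}) with $k=2$ requires $\mathbb{E}[(W-W^A)^2]<\infty$. In your coupling the second mixture component contributes $\frac{\tau^2}{b^2+\tau^2}\mathbb{E}[(W-V-\eta^L)^2]$, and $\mathbb{E}[(\eta^L)^2]=\mathbb{E}[\eta^4]/(6\tau^2)$ by the centered-equilibrium moment relation; the theorem assumes only $\mathbb{E}[\eta^2]=\tau^2$, so this may be infinite, and even when finite it introduces a fourth-moment dependence absent from (\ref{4.1k1}). The fix is exactly the one you already deploy for parts~(ii) and~(iii): work directly with (\ref{3.51}) and your tail estimate $\mathbb{P}(|W-W^A|>\beta)\le\frac{b^2}{b^2+\tau^2}\mathbb{P}(|\eta|>\beta)+\frac{\tau^2}{b^2+\tau^2}$, inserting Chebyshev for part~(i). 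This recovers the $\tau^{2/3}$ order, with numerical constants differing slightly from the paper's since their $(1-B)$ factor buys an extra $1/6$ after integrating against the Beta density.
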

\begin{proof}
(i) With \eqref{nvm} in view, we write
\[
W=_d aX+\sqrt{X}N_b+\eta,
\]
where $X$ has a unit-mean exponential distribution, $N_b\sim\mathrm{N}(0,b^2)$ has a normal distribution, and all random variables are independent. Then, similarly to \eqref{eq:zerobias}, we may write $W^A=d B(\sqrt{X}N_b+\eta)^z$, where $B\sim\mathrm{Beta}(2,1)$ is independent of all else. By Lemma 2.1(v) of \cite{gr97}, we may construct the zero-biased version of a sum of independent, mean-zero random variables by choosing one of the summands, independently of all else, with probability proportional to its variance, and then replacing the chosen summand with a zero-biased version. Noting that $\mathrm{Var}(\sqrt{X}N_b)=b^2$, and letting $I$ be a Bernoulli random variable, independent of all else, with mean $b^2/(b^2+\tau^2)$, we may write
\[
W^A=_d BI\left[\left(\sqrt{X}N_b\right)^z+\eta\right]+B(1-I)\left[\sqrt{X}N_b+\eta^z\right].
\]
Upon using \eqref{eq:zerobias}, we thus have
\[
W^A=_d I(W-\eta)+BI\eta+B(1-I)(\sqrt{X}N_b+\eta^z),
\]
and hence
\begin{equation}\label{eq:diff}
W-W^A=_d (1-I)W+(1-B)I\eta-B(1-I)(\sqrt{X}N_b+\eta^z).
\end{equation}
In order to apply the bound from Theorem \ref{thm3.5}(i), we bound $\mathbb{P}(|W-W^A|>\beta)$ for $\beta>0$. By conditioning on $I$ we have that
\begin{align}
\nonumber\mathbb{P}(|W-W^A|>\beta)&\leq\frac{b^2}{b^2+\tau^2}\mathbb{P}((1-B)|\eta|>\beta)+\frac{\tau^2}{b^2+\tau^2}\\
\label{eq:chebyshev}&=\frac{2b^2}{b^2+\tau^2}\int_0^1u\mathbb{P}\left(|\eta|>\frac{\beta}{1-u}\right)\,\mathrm{d}u+\frac{\tau^2}{b^2+\tau^2}\\
\nonumber&\leq\frac{2b^2\tau^2}{\beta^2(b^2+\tau^2)}\int_0^1u(1-u)^2\,\mathrm{d}u+\frac{\tau^2}{b^2+\tau^2}=\frac{\tau^2}{b^2+\tau^2}\left(\frac{b^2}{6\beta^2}+1\right),
\end{align}
where the final inequality uses Chebyshev's inequality. Then inequality \eqref{3.51} gives
\[
d_\mathrm{K}(W,Z)\leq c\beta+\frac{d\tau^2}{b^2+\tau^2}\left(\frac{b^2}{6\beta^2}+1\right)
\]
for any $\beta>0$. Choosing 
\[\beta=\left(\frac{b^2d\tau^2}{3(b^2+\tau^2)c}\right)^{1/3}\]
and noting that $3^{-1/3}+3^{2/3}/6<1.05$ yields the Kolmogorov distance bound (\ref{4.1k1}).

We could similarly derive a Wasserstein error bound using the representation \eqref{eq:diff} in conjunction with \eqref{3.53}, but note that a bound of the same order can be more simply derived by writing
\begin{equation}\label{eq:wass1}
d_\mathrm{W}(W,Z^\prime)\leq\mathbb{E}|W-Z^\prime|=\mathbb{E}|\eta|\leq\sqrt{\mathbb{E}[\eta^2]}=\tau,
\end{equation}
and combining this with a bound on $d_\mathrm{W}(Z^\prime,Z)$ established using a standard comparison of generators argument: We let $f_h$ denote the solution to the Stein equation \eqref{ivp} for $Z^\prime\sim\mathrm{AL}(0,a,b)$ and $h\in\mathcal{H}_\mathrm{W}$, and note that we may write 
\[
d_\mathrm{W}(Z^\prime,Z)=\sup_{h\in\mathcal{H}_\mathrm{W}}\left|\mathbb{E}\left[\frac{b^2}{2}f_h^{\prime\prime}(Z)+af_h^\prime(Z)-f_h(Z)\right]-\mathbb{E}\left[\frac{b^2+\tau^2}{2}f_h^{\prime\prime}(Z)+af_h^\prime(Z)-f_h(Z)\right]\right|\,,
\]
since this latter expectation is zero by Lemma \ref{lem2.1}. Hence
\begin{equation}\label{eq:wass2}
d_\mathrm{W}(Z^\prime,Z)\leq\frac{\tau^2}{2}\sup_{h\in\mathcal{H}_\mathrm{W}}\lVert f_h^{\prime\prime}\rVert\leq\frac{\tau^2}{\sqrt{2b^2+a^2}},
\end{equation}
by \eqref{2.6}. The result \eqref{4.1w1} now follows on combining \eqref{eq:wass1} and \eqref{eq:wass2} using the triangle inequality. 

\vspace{3mm}

\noindent (ii) We follow the proof of part (i) up to \eqref{eq:chebyshev}, where we replace the use of Chebyshev's inequality with \eqref{eq:tails} to get
\begin{align*}
\mathbb{P}(|W-W^A|>\beta)&\leq\frac{2b^2C\tau^n}{\beta^n(b^2+\tau^2)}\int_0^1u(1-u)^n\,\mathrm{d}u+\frac{\tau^2}{b^2+\tau^2}\\
&=\frac{2b^2C\tau^n}{\beta^n(b^2+\tau^2)(n^2+3n+2)}+\frac{\tau^2}{b^2+\tau^2}.
\end{align*}
We now apply inequality \eqref{3.51}, choose 
\[
\beta=\left(\frac{2b^2Cdn\tau^n}{c(b^2+\tau^2)(n^2+3n+2)}\right)^{1/(n+1)},
\]
and note that $n^{1/(n+1)}+n^{-n/(n+1)}\leq2$ to obtain the bound (\ref{4.1k2}).

\vspace{3mm}

\noindent (iii) We start by recalling the classical Mill's ratio upper bound for the standard normal distribution. Let $N$ follow the standard normal distribution. Then $\mathbb{P}(N>x)<\varphi(x)/x$ for $x>0$, where $\varphi(x)$ denotes the standard normal density. We follow the proof of part (i) up to (\ref{eq:chebyshev}) but now use the Mill's ratio upper bound to get
\begin{align*}
\mathbb{P}(|W-W^A|>\beta)&\leq\frac{2b^2}{b^2+\tau^2}\cdot\frac{1}{\beta}\sqrt{\frac{2}{\pi}}\int_0^1u(1-u)\exp\bigg(-\frac{\beta^2}{2\tau^2(1-u)^2}\bigg)\,\mathrm{d}u +\frac{\tau^2}{b^2+\tau^2}\\
&\leq\frac{2b^2}{b^2+\tau^2}\cdot\frac{1}{\beta}\sqrt{\frac{2}{\pi}}\int_0^1\frac{1}{4}\exp\bigg(-\frac{\beta^2}{2\tau^2}\bigg)\,\mathrm{d}u +\frac{\tau^2}{b^2+\tau^2}\\
&=\frac{b^2}{\sqrt{2\pi}\beta(b^2+\tau^2)}\exp\bigg(-\frac{\beta^2}{2\tau^2}\bigg) +\frac{\tau^2}{b^2+\tau^2}.
\end{align*}
Applying inequality (\ref{3.51}) with $\beta=2\tau\sqrt{\log(1/\tau)}$ now yields the bound (\ref{4.1k3}). Note that our assumption that $0<\tau<1$ ensures that $\beta>0$.

Finally, we note the improved Wasserstein distance bound (\ref{dddw}) follows since $\mathbb{E}|\eta|=\tau\sqrt{2/\pi}$ when $\eta\sim \mathrm{N}(0,\tau^2)$ (which improves on the estimate $\mathbb{E}|\eta|\leq \tau$ used in part (i) of the proof).
\end{proof}

\subsection{Geometric random sums}\label{sec4.1}

Let $X_1,X_2,\ldots$ be a sequence of i.i.d.\ positive, non-degenerate random variables with mean $\lambda>0$ and let $N_p\sim \mathrm{Geom}(p)$ be independent of the $X_i$'s. A celebrated result of \cite{renyi} states that the geometric random sum $p\sum_{i=1}^{N_p} X_i$ converges in distribution as $p\rightarrow0$ to an exponential random variable with mean $1/\lambda$. In \cite{pekoz1}, the authors applied Stein's method for exponential approximation with the equilibrium coupling to derive explicit $O(p^{1/2})$ Kolmogorov and Wasserstein distance bounds for this distributional approximation. Stein's method has also been used to provide explicit error bounds in the approximation of geometric random sums by a geometric distribution \cite{dl25,prr_geometric}, in the setting of approximation by a geometric random sum \cite{d10,d16}, gamma approximation of negative binomial sums \cite{lx22}, and in various approximation results for random sums more generally \cite{d22,d15_clt,rollin05}.

When the sequence of random variables $X_1,X_2,\ldots$ are assumed to have zero mean and a non-zero and finite variance, the geometric random sum $\sqrt{p}\sum_{i=1}^{N_p} X_i$ converges in distribution to the Laplace distribution (see \cite[Proposition 2.2.9]{kkp01} and \cite[Theorem 2.1]{toda}). Following the approach of \cite{pekoz1} in the Laplace setting, \cite[Theorem 1.3]{pike} quantified this distributional approximation with an explicit $O(p^{1/2})$ bound in the bounded Wasserstein distance, whilst \cite{g21} and \cite{s21} used a refined application of Stein's method for Laplace approximation to obtain $O(p^{1/2})$ bounds in the stronger Kolmogorov and Wasserstein metrics, and the former work also obtained a $O(p)$ bound in the $d_2$ metric under a vanishing third moment assumption. Further bounds quantifying the Laplace approximation of the geometric random sum $\sqrt{p}\sum_{i=1}^{N_p} X_i$ are given in \cite{bu24,d15,g20,ks12,s18}, derived via alternative approaches.


More generally, a suitably standardised geometric random sum of i.i.d.\ random variables with small non-zero mean and a non-zero and finite variance converges in distribution to an asymmetric Laplace distribution. More specifically, suppose that $X_1,X_2,\ldots$ is an i.i.d.\ sequence of random variables with $\mathbb{E}[X_1]=0$ and $\mathrm{Var}(X_1)=\sigma^2\in(0,\infty)$. Then Proposition 3.4.4 of \cite{kkp01} states that, as $p\rightarrow0$,
\begin{align}\label{daaa}
\sqrt{p}\sum_{i=1}^{N_p}(X_i+\sqrt{p}a)\rightarrow_d \mathrm{AL}(0,a,\sigma),  
\end{align}
where we have made an obvious abuse of notation. More recently, \cite[Theorem 2.1]{toda} established this convergence in distribution result under weaker Lindeberg-type conditions. In the following theorem, we 
quantify the distributional approximation (\ref{daaa}) by providing $O(p^{1/2})$ Kolmogorov and Wasserstein distance bounds, and faster $O(p)$ convergence rates with respect to the $d_2$ metric under an additional vanishing third moment condition, for independent $X_i$. 
\begin{theorem}\label{thm4.1}
	Let $X_1,X_2,\ldots$ be a sequence of independent random variables with $\mathbb{E}[X_i]=0$, $\mathrm{Var}(X_i)=\sigma^2\in(0,\infty)$, for $i=1,2,\ldots$. Denote $\rho_k=\sup_{i\geq1}\mathbb{E}[|X_i|^{k}]$, for $k\geq1$.
    Suppose that $N\sim \mathrm{Geom}(p),$ $p\in(0,1),$ is independent of the  $X_i$'s. Set $W=\sqrt{p}\sum_{i=1}^{N}(X_i+\sqrt{p}a)$
    and let $Z\sim\mathrm{AL}(0,a,\sigma)$.

\vspace{3mm}

\noindent (i) Let $Y_i=X_i+\sqrt{p}a$, for $i=1,2\ldots$. Then, under the above assumptions, and the additional assumption that $\sigma^2>p a^2$, the following bound holds:
	\begin{align}
		d_\mathrm{K}(W,Z)&\leq\sqrt{p}\bigg(\frac{14}{\sqrt{2\sigma^2+a^2}}+\frac{7 |a|}{\sigma^2}\bigg)\sup_{i\geq1}\big\|F_{Y_i}^{-1}-F_{Y_i^A}^{-1}\big\|,\label{kb}
	\end{align}
  where $F^{-1}_Y$ denotes the generalised inverse of the cumulative distribution function of the random variable $Y$. 
  
  We now drop the assumption that $\sigma^2>p a^2$, but invoke the stronger moment condition $\rho_{k+2}<\infty$, for some $k\geq1$. Then 
	\begin{align}
		d_\mathrm{K}(W,Z)
         &\leq 4p^\frac{k}{2(k+1)}\bigg(\frac{14}{\sqrt{2\sigma^2+a^2}}+\frac{7 |a|}{\sigma^2}\bigg)^{\frac{k}{k+1}}\bigg\{(k+k^{-k})\bigg(5+\frac{7|a|}{\sqrt{2\sigma^2+a^2}}\bigg) \frac{\rho_{k+2}}{\sigma^2}\bigg\}^{\frac{1}{k+1}}.\label{4.14}
	\end{align}
	\noindent (ii) Suppose now that $\rho_3<\infty$. Then
	\begin{align}
		d_\mathrm{W}\big(W,Z\big)&\leq 2\sqrt{p}\bigg\{\sqrt{\sigma^2+a^2}+\frac{8(1+3\sqrt{p}|a|/\sigma)(\rho_3+|a|^3)}{3\sigma^2}\bigg\}.\label{wb}
	\end{align}
	\noindent (iii) Suppose now that $X_1,X_2,\ldots$ are i.i.d.\ with $\mathbb{E}[X_1^3]=0$ and $\mathbb{E}[X_1^4]<\infty$. Also, assume that $\sigma^2>pa^2$. Then
	\begin{align}
		d_2(W,Z)&\leq p\bigg\{\frac{\sigma^2}{\sqrt{2\sigma^2+a^2}}\bigg(\frac{\sqrt{2}\sigma}{1-p}+\frac{\sigma\sqrt{p}\log(1/p)}{1-p}\bigg(2+\frac{\mathbb{E}[|X_1|^3]}{\sigma^3}\bigg)\nonumber\\
		&\quad+|a|+\frac{2p|a|^3}{3(\sigma^2-pa^2)}\bigg)+\sigma^2+pa^2+\frac{\mathbb{E}[X_1^4]}{6(\sigma^2-pa^2)}\bigg\}.\label{4.13} 
	\end{align}
\end{theorem}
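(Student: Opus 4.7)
The plan is to combine the general bounds of Theorem \ref{thm3.5} with the explicit construction from Lemma \ref{lem3.4}, which yields the key identity $W-W^A=\sqrt{p}(Y_N-Y_N^A)$. Each of the three estimates then reduces to controlling this one-term difference under an appropriately chosen coupling of $Y_N^A$ with $Y_N$, combined with the moment identities of Proposition \ref{prop3.3}(v).

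For part (i), the first bound follows from the quantile coupling of $Y_i$ with $Y_i^A$, under which $|Y_i-Y_i^A|\leq\|F_{Y_i}^{-1}-F_{Y_i^A}^{-1}\|$ almost surely, and hence $|W-W^A|\leq\sqrt{p}\sup_{i\geq 1}\|F_{Y_i}^{-1}-F_{Y_i^A}^{-1}\|$ almost surely; the tail probability term in (\ref{3.51}) then vanishes at $\beta$ equal to this upper bound, producing (\ref{kb}). For (\ref{4.14}), one instead estimates $\mathbb{E}|W-W^A|^k=p^{k/2}\mathbb{E}|Y_N-Y_N^A|^k$ via the elementary inequality $|x-y|^k\leq 2^{k-1}(|x|^k+|y|^k)$, and then applies the simplified absolute moment bound (\ref{simplebd}) to express $\mathbb{E}|Y_i^A|^k$ in terms of $\mathbb{E}|Y_i|^{k+2}$, which is in turn controlled by $\rho_{k+2}$ and lower-order powers of $\sqrt{p}|a|$ arising from the decomposition $Y_i=X_i+\sqrt{p}a$; substitution into (\ref{kolb}) then yields (\ref{4.14}).

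Part (ii) is similar but cleaner: applying (\ref{3.53}) gives $d_\mathrm{W}(W,Z)\leq 2\sqrt{p}\,\mathbb{E}|Y_N-Y_N^A|$, and the triangle inequality combined with the estimate $\mathbb{E}|Y_N|\leq\sqrt{\sigma^2+pa^2}$ (Jensen's inequality) and the absolute moment relation (\ref{absm}) with $r=1$ applied to $Y_N^A$ produces (\ref{wb}).

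The main obstacle is part (iii), where achieving the $O(p)$ rate in place of $O(\sqrt{p})$ forces us to exploit cancellation in both terms of (\ref{3.56}). The second-moment contribution $\mathbb{E}[(W-W^A)^2]=p\,\mathbb{E}[(Y_1-Y_1^A)^2]$ is computed by expansion using the moment formula (\ref{mr}) with $r=2$ for $\mathbb{E}[(Y_1^A)^2]$, wherein the vanishing third moment assumption $\mathbb{E}[X_1^3]=0$ eliminates the leading $\sqrt{p}a$ contributions to $\mathbb{E}[Y_1^3]$ that enter the numerator. For the conditional expectation $\mathbb{E}|\mathbb{E}[W-W^A\mid W]|$, the key is to choose $Y_N^A$ independent of $(N,Y_1,\ldots,Y_N)$ with the common asymmetric equilibrium distribution of $Y_1$ (permissible since the $Y_i$ are i.i.d.); then $\mathbb{E}[\sqrt{p}Y_N^A\mid W]=\sqrt{p}\,\mathbb{E}[Y_1^A]$, and (\ref{mr}) with $r=1$ together with $\mathbb{E}[X_1^3]=0$ shows that $\mathbb{E}[Y_1^A]$ reduces to a quantity of order $p^{3/2}|a|^3/(\sigma^2-pa^2)$. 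For the remaining piece, exchangeability of $Y_1,\ldots,Y_N$ conditional on $N$ yields by a standard sufficiency argument that $\mathbb{E}[\sqrt{p}Y_N\mid W,N]=W/N$, whence $\mathbb{E}|\mathbb{E}[\sqrt{p}Y_N\mid W]|=\mathbb{E}[|W|/N]$; this is then bounded by moment estimates that invoke the explicit formula $\mathbb{E}[1/N]=p\log(1/p)/(1-p)$ for $N\sim\mathrm{Geom}(p)$, producing the $\sqrt{p}\log(1/p)$ contribution to (\ref{4.13}). Combining these estimates with the prefactors from (\ref{3.56}) gives the full bound.
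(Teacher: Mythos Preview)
Your outline captures the main architecture of the paper's proof---combining Lemma \ref{lem3.4} with Theorem \ref{thm3.5} and the moment identities of Proposition \ref{prop3.3}(v)---and parts (i, first bound) and (iii) are essentially as in the paper. However, there is a genuine gap in your treatment of the second bound (\ref{4.14}) and of part (ii).

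The issue is that both (\ref{4.14}) and (\ref{wb}) are asserted \emph{without} the hypothesis $\sigma^2>pa^2$. But the entire machinery you invoke---Lemma \ref{lem3.4}, the random variables $Y_i^A$, and the bounds (\ref{kolb}) and (\ref{3.53}) of Theorem \ref{thm3.5}---requires $\mathrm{Var}(W)>(\mathbb{E}[W])^2$, which here is exactly $\sigma^2>pa^2$. When $\sigma^2\leq pa^2$, neither $W^A$ nor $Y_i^A$ exists and your argument simply does not start. Even when $\sigma^2>pa^2$ but the gap is small, the bound (\ref{simplebd}) carries a factor $1/(\sigma^2-pa^2)$ that prevents you from reaching the clean $1/\sigma^2$ and the explicit constant $4$ appearing in (\ref{4.14}). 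The paper handles this by a case split: one first proves (\ref{4.14}) under the stronger assumption $\sigma^2\geq 3pa^2$ (which allows $\sigma^2-pa^2\geq\tfrac{2}{3}\sigma^2$ and $\sqrt{p}|a|/\sigma\leq 1/\sqrt{3}$, yielding the stated constants), and then verifies separately that when $\sigma^2\leq 3pa^2$ the right-hand side of (\ref{4.14}) already exceeds $1\geq d_{\mathrm{K}}(W,Z)$, so the inequality holds trivially. An entirely analogous split (at $\sigma^2\geq 2pa^2$, with the complementary regime handled via the cheap estimate $d_{\mathrm{W}}(W,Z)\leq\mathbb{E}|W|+\mathbb{E}|Z|$) is needed for (\ref{wb}). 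Your proposal omits both of these, and without them the argument is incomplete.
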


\begin{remark}  Inequalities (\ref{kb}) and (\ref{4.13}) hold under the assumption that $\sigma^2>pa^2$. The bound (\ref{kb}) is stated in terms of the generalised inverse of the cumulative distribution function of the random variables $Y_i^A$, $i\geq1$, and the assumption $\sigma^2>pa^2$ ensures that these random variables exist. Moreover, the assumption $\sigma^2>pa^2$ under which inequalities (\ref{kb}) and (\ref{4.13}) are stated is rather mild since $\sqrt{p}\sum_{i=1}^{N}(X_i+\sqrt{p}a)\rightarrow_d Z\sim\mathrm{AL}(0,a,\sigma)$ as $p\rightarrow0$, for fixed $a\in\mathbb{R}$ and $\sigma^2>0$.
\end{remark}

\begin{remark} The Kolmogorov distance bound (\ref{kb}) is of the optimal order $O(p^{1/2})$, but in applications one would be required to compute the quantity $\sup_{i\geq1}\|F_{Y_i}^{-1}-F_{Y_i^A}^{-1}\|$. We note that this quantity is easily bounded when the $Y_i$'s are bounded. Indeed, suppose that $|Y_i|\leq C$ for all $i\geq1$. Then, by part (iii) of Proposition \ref{prop3.3}, $|Y_i^A|\leq C$ for all $i\geq1$. We therefore have the cheap bound $\sup_{i\geq1}\|F_{Y_i}^{-1}-F_{Y_i^A}^{-1}\|\leq 2C$, from which we obtain the Kolmogorov distance bound
\begin{equation*}
d_\mathrm{K}(W,Z)\leq 14C\sqrt{p}\bigg(\frac{2}{\sqrt{2\sigma^2+a^2}}+\frac{|a|}{\sigma^2}\bigg).    
\end{equation*}

On the other hand, if $\|F_{Y_i}^{-1}-F_{Y_i^A}^{-1}\|$ is difficult to compute, one may prefer to use inequality (\ref{4.14}). The bound (\ref{4.14}) is of the worse order $O(p^{k/(2(k+1))})$. However, we observe that as $k$ increases (provided the moment condition $\rho_{k+2}<\infty$ is met) the rate of convergence improves, with the exponent $k/(2(k+1))$ approaching the optimal exponent $1/2$ as $k\rightarrow\infty$.
\end{remark}


\begin{proof} 
Throughout the proof we will use the representation for $W-W^A$ given by Lemma \ref{lem3.4}.

\vspace{3mm}

\noindent
(i) By inequality (\ref{3.51}) of Theorem \ref{thm3.5}, we have the bound 
	\begin{align}
		d_\mathrm{K}(W,Z)\leq \bigg(\frac{14}{\sqrt{2\sigma^2+a^2}}+\frac{7 |a|}{\sigma^2}\bigg)\beta+\bigg(5+\frac{7|a|}{\sqrt{2\sigma^2+a^2}}\bigg)\mathbb{P}(|W-W^A|>\beta).\label{kbnd}
	\end{align}
	Upon setting $\beta=\sqrt{p}\sup_{i\geq1}\|F_{Y_i}^{-1}-F^{-1}_{Y_i^A}\|$ in inequality (\ref{kbnd}) we deduce inequality (\ref{kb}) by an application of Strassen's theorem. 

    We now prove inequality (\ref{4.14}). We first prove the inequality under the assumption that $\sigma^2\geq 3pa^2$ (so that $\sigma^2>pa^2$, which is needed to ensure the existence of $Y_i^A$ for all $i\geq1$).  
    We have that, for $k\geq1$,
	\begin{align}
		\mathbb{E}\big[|W-W^A|^k\big]&\leq p^{k/2}\mathbb{E}\big[|Y_N-Y_N^A|^k\big]\leq 2^{k-1}p^{k/2}\big(\mathbb{E}[|Y_N|^k]+\mathbb{E}[|Y_N^A|^k]\big)\nonumber\\
		&\leq2^{k-1}p^{k/2}\bigg(\mathbb{E}[|Y_N|^k]+\frac{2(1+(k+2)\sqrt{p}|a|/\sigma)\mathbb{E}[|Y_N|^{k+2}]}{(k+1)(k+2)(\sigma^2-pa^2)}\bigg)\nonumber \\
        &\leq2^{k-1}p^{k/2}\bigg(\mathbb{E}[|Y_N|^k]+\frac{(1+\sqrt{3})\mathbb{E}[|Y_N|^{k+2}]}{2\sigma^2}\bigg),\label{junl}
	\end{align}
	where we used the basic inequality $|x+y|^{r}\leq2^{r-1}(|x|^r+|y|^r)$, $r\geq1$, and the absolute moment bound (\ref{simplebd}), and we obtained the simplification in the final step by using that $k\geq1$ and the assumption that $\sigma^2\geq 3pa^2$.
    We now obtain a simpler bound for $\mathbb{E}[|W-W^A|^k]$. By H\"older's inequality, we have the bound $\sigma^2\mathbb{E}[|Y_N|^k]\leq \mathbb{E}[|Y_N|^{k+2}]$. 
    Also, by using  H\"older's inequality and the assumption that $\sigma^2\geq 3pa^2$,
    \begin{align*}\mathbb{E}[|Y_N|^{k+2}]&\leq 2^{k+1}(\mathbb{E}[|X_N|^{k+2}]+p^{k/2+1}|a|^{k+2})\leq2^{k+1}(\rho_{k+2}+p^{k/2+1}|a|^{k+2})\\
    &\leq 2^{k+1}(\rho_{k+2}+(1/3)^{k/2+1}\sigma^{k+2})\leq 2^{k+1}(1+3^{-3/2})\rho_{k+2},
    \end{align*}
    where the final inequality holds since $k\geq1$.
Applying these bounds to inequality (\ref{junl}) gives that
\begin{align}
\mathbb{E}\big[|W-W^A|^k\big]&\leq2^{k-1}p^{k/2}\bigg(\frac{\mathbb{E}[|Y_N|^{k+2}]}{\sigma^2}+\frac{(1+\sqrt{3})\,\mathbb{E}[|Y_N|^{k+2}]}{2\sigma^2}\bigg)\nonumber\\
&\leq 2^{k-1}p^{k/2}\bigg(1+\frac{1+\sqrt{3}}{2}\bigg)\cdot2^{k+1}(1+3^{-3/2})\frac{\rho_{k+2}}{\sigma^2} \nonumber\\
&\leq 2^{2k+2}p^{k/2}\frac{\rho_{k+2}}{\sigma^2}. \label{jul}
\end{align}
Substituting the bound (\ref{jul}) into the bound (\ref{kolb}) of Theorem \ref{thm3.5}
yields inequality (\ref{4.14}) under the assumption $\sigma^2\geq 3pa^2$.

We now prove that inequality (\ref{4.14}) holds when $0<\sigma^2\leq 3pa^2$. Under this assumption,
\begin{align*}
&4p^\frac{k}{2(k+1)}\bigg(\frac{14}{\sqrt{2\sigma^2+a^2}}+\frac{7 |a|}{\sigma^2}\bigg)^{\frac{k}{k+1}}\bigg\{(k+k^{-k})\bigg(5+\frac{7|a|}{\sqrt{2\sigma^2+a^2}}\bigg) \frac{\rho_{k+2}}{\sigma^2}\bigg\}^{\frac{1}{k+1}}    \\
&\quad\geq4\bigg(\frac{\sigma}{\sqrt{3}|a|}\bigg)^{\frac{k}{k+1}}\bigg(\frac{7 |a|}{\sigma^2}\bigg)^{\frac{k}{k+1}}(10\sigma^k)^{\frac{1}{k+1}}>4\bigg(\frac{7}{\sqrt{3}}\bigg)^{\frac{k}{k+1}}\geq4\bigg(\frac{7}{\sqrt{3}}\bigg)^{1/2}>1\geq d_\mathrm{K}(W,Z),
\end{align*}
where in the first inequality we used that $\rho_{k+2}\geq\sigma^{k+2}$ by H\"older's inequality.

    \vspace{3mm}
	
	\noindent (ii) We now prove inequality (\ref{wb}), again considering the cases $\sigma^2\geq 2pa^2$ and $0<\sigma^2\leq 2pa^2$ separately. Firstly, we suppose that $\sigma^2\geq 2pa^2$. Then arguing similarly to part (i) of the proof we get that
    \begin{align}
     \mathbb{E}|Y_N-Y_N^A|&\leq\mathbb{E}|Y_N|+\mathbb{E}|Y_N^A|\leq\sqrt{\mathbb{E}[Y_N^2]}+\frac{(1+3\sqrt{p}|a|/\sigma)\mathbb{E}[|Y_N|^3]}{3(\sigma^2-pa^2)} \nonumber\\
     &\leq\sqrt{\sigma^2+pa^2}+\frac{4(1+3\sqrt{p}|a|/\sigma)(\rho_{3}+p^{3/2}|a|^3)}{3(\sigma^2-pa^2)}\nonumber\\
     &\leq 
     \sqrt{\sigma^2+a^2}+\frac{8(1+3\sqrt{p}|a|/\sigma)(\rho_{3}+|a|^3)}{3\sigma^2},\label{11f}
    \end{align}
where in the last step we used the assumption that $\sigma^2\geq 2pa^2$ and also that $p<1$. Substituting inequality (\ref{11f}) into inequality (\ref{3.53}) now yields the desired bound (\ref{wb}).

Suppose now that $0<\sigma^2\leq 2pa^2$. We have the cheap bound
\begin{align*}
d_\mathrm{W}(W,Z)\leq \mathbb{E}|W-Z|\leq \mathbb{E}|W|+\mathbb{E}|Z|\leq \sqrt{\mathbb{E}[W^2]}  +\sqrt{\mathbb{E}[Z^2]} \leq 2\sqrt{2a^2+\sigma^2},  
\end{align*}
where in obtaining the last inequality we used that $\mathbb{E}[W^2]=\mathrm{Var}(W)+(\mathbb{E}[W])^2=\sigma^2+(2-p)a^2\leq \sigma^2+2a^2$, and that $\mathbb{E}[W]=a$ and $\mathrm{Var}(W)=\sigma^2+(1-p)a^2$ (see the proof of Lemma \ref{lem3.4}). By H\"older's inequality we have that $\rho_3\geq\sigma^3$, and therefore, under the assumption that $0<\sigma^2\leq 2pa^2$,
\begin{align*}
&2\sqrt{p}\bigg\{\sqrt{\sigma^2+a^2}+\frac{8(1+3\sqrt{p}|a|/\sigma)(\rho_3+|a|^3)}{3\sigma^2}\bigg\}\\
&\quad\geq\frac{\sqrt{2}\sigma}{|a|}\bigg\{\sqrt{\sigma^2+a^2}+\frac{8(\sigma^3+|a|^3)}{3\sigma^2}\bigg\}=:C(a,\sigma^2).    
\end{align*}
By treating the cases $\sigma^2\geq a^2$ and $0<\sigma^2<a^2$ separately, it is easily seen that $C(a,\sigma^2)\geq 2\sqrt{2a^2+\sigma^2}\geq d_\mathrm{W}(W,Z)$, and we have therefore verified that inequality (\ref{wb}) holds for $0<\sigma^2\leq 2pa^2$.

 \vspace{3mm}
	
	\noindent (iii) We now suppose that $X_1,X_2,\ldots$ are i.i.d.\ with $\mathbb{E}[X_1^3]=0$ and $\mathbb{E}[X_1^4]<\infty$, so that $Y_1,Y_2,\ldots$ are also independent and identically distributed. We also assume that $\sigma^2>pa^2$, which ensures that $W^A$ and $Y_N^A$ exist. By inequality (\ref{3.56}) of Theorem \ref{thm3.5} we have the bound
	\begin{align}
		{d}_2(W,Z)\leq\frac{\sigma^2}{\sqrt{2\sigma^2+a^2}}\mathbb{E}\big[|\mathbb{E}[W-W^A\,|\,W]|\big]+\mathbb{E}\big[(W-W^A)^2\big].
			\end{align}
	We therefore require bounds for $\mathbb{E}[|\mathbb{E}[W-W^A\,|\,W]|]$ and $\mathbb{E}[(W-W^A)^2]$, and we start by bounding the latter expectation. By the independence of $Y_N$ and $Y_N^A$ we obtain that
	\begin{align}
		\mathbb{E}\big[(W-W^A)^2\big]=p\mathbb{E}\big[(Y_N^A-Y_N)^2\big]&=p\big(\mathbb{E}[Y_N^2]+\mathbb{E}[(Y_N^A)^2]\big)\nonumber\\
		&=p\bigg(\sigma^2+pa^2+\frac{\mathbb{E}[Y_1^4]-4\sqrt{p}a\mathbb{E}[Y_1^3]}{6(\sigma^2-pa^2)}\bigg), \label{pppp}
	\end{align}
	where in the final step we used the moment relation (\ref{mr}).
	Then, from the assumption that $\mathbb{E}[X_1^3]=0$, we obtain that 
    \begin{align*}
		\mathbb{E}[Y_1^3]=\mathbb{E}\big[(X_1+\sqrt{p}a)^3\big]&=3\mathbb{E}[X_1^2]\sqrt{p}a+p^{3/2}a^3=3\sigma^2\sqrt{p}a+p^{3/2}a^3,
	\end{align*}
    so that
    \begin{align}\label{qqqq}
     \mathbb{E}[Y_1^4]-4\sqrt{p}a\mathbb{E}[Y_1^3]=\mathbb{E}[X_1^4]+6pa^2\sigma^2+p^2a^4-4\sqrt{p}a(3\sigma^2\sqrt{p}a+p^{3/2}a^3)\leq\rho_4,
    \end{align}
    where in obtaining the equality we used that $\mathbb{E}[X_1]=\mathbb{E}[X_1^3]=0$ and $\mathbb{E}[X_1^2]=\sigma^2$. From (\ref{pppp}) and (\ref{qqqq}) we obtain the bound
	\begin{align}
		\mathbb{E}\big[(W-W^A)^2\big]=p\bigg(\sigma^2+pa^2+\frac{\rho_4}{6(\sigma^2-pa^2)}\bigg).\label{2eq}
	\end{align}
    
	We now obtain a bound for the conditional expectation $\mathbb{E}[|\mathbb{E}[W-W^A\,|\,W]|]$. Since $Y_N$ and $W^A$ are independent, we have that 
	\begin{align}\label{eqq1}
		\mathbb{E}[W-W^A\,|\,W]=\sqrt{p}\mathbb{E}[Y_N-Y_N^A\,|\,W]=\sqrt{p}\big(\mathbb{E}[Y_N\,|\,W]-\mathbb{E}[Y_N^A]\big).
	\end{align}
    By the moment relation (\ref{mr}) we obtain that 
	\begin{align}\label{eqq2}
		\mathbb{E}[Y_N^A]=\frac{\mathbb{E}[Y_N^3]-3\sqrt{p}a\mathbb{E}[Y_N^2]}{3(\sigma^2-pa^2)}=-\frac{2p^{3/2}a^3}{3(\sigma^2-pa^2)},
	\end{align}
	whilst applying  the tower property of conditional expectation gives that 
	\begin{align}\label{eqq3}
		\mathbb{E}[Y_N\,|\,W]=\mathbb{E}\big[\mathbb{E}[Y_N\,|\,W,N]\,|\,W\big]=\mathbb{E}\bigg[\frac{W}{\sqrt{p}N}\,\bigg|\,W\bigg],
	\end{align}
	where the last step follows since the $Y_i$'s are i.i.d.\ and hence exchangeable.
	With the equalities (\ref{eqq1}), (\ref{eqq2}) and (\ref{eqq3}) we obtain that
\begin{align}
 \mathbb{E}\big[|\mathbb{E}[W-W^A\,|\,W]|\big]
		&\leq\mathbb{E}\bigg[\mathbb{E}\bigg[\frac{|W|}{N}\,\bigg|\,W\bigg]\bigg]+\frac{2p^2|a|^3}{3(\sigma^2-pa^2)}=\mathbb{E}\bigg[\frac{|W|}{N}\bigg]+\frac{2p^2|a|^3}{3(\sigma^2-pa^2)} \nonumber  \\
        &=\sqrt{p}\sum_{n=1}^{\infty}\frac{1}{n}\mathbb{E}\bigg|\sum_{i=1}^{n}X_i+\sqrt{p}a\bigg|\mathbb{P}(N=n)+\frac{2p^2|a|^3}{3(\sigma^2-pa^2)}\nonumber\\
		&\leq\sqrt{p}\sum_{n=1}^{\infty}\frac{1}{n}\mathbb{E}\bigg|\sum_{i=1}^{n}X_i\bigg|\mathbb{P}(N=n)+p|a|\sum_{n=1}^{\infty}\mathbb{P}(N=n)+\frac{2p^2|a|^3}{3(\sigma^2-pa^2)}\nonumber\\
		&\leq\frac{\sqrt{2}\sigma p}{1-p}+\frac{\sigma p^{3/2}\log(1/p)}{1-p}\bigg(2+\frac{\mathbb{E}[|X_1|^3]}{\sigma^3}\bigg)+p|a|+\frac{2p^2|a|^3}{3(\sigma^2-pa^2)}, \label{de2}
\end{align}
where in the last step we used the following bound that was given in the proof of Theorem 1.1 in \cite{g21}:
\begin{equation*}
	\sqrt{p}\sum_{n=1}^{\infty}\frac{1}{n}\mathbb{E}\bigg|\sum_{i=1}^{n}X_i\bigg|\mathbb{P}(N=n)\leq\frac{\sqrt{2}\sigma p}{1-p}+\frac{\sigma p^{3/2}\log(1/p)}{1-p}\bigg(2+\frac{\mathbb{E}[|X_1|^3]}{\sigma^3}\bigg).
\end{equation*}	
	Finally, substituting inequalities (\ref{2eq}) and (\ref{de2}) into inequality (\ref{3.56}) yields the desired bound (\ref{4.13}).
\end{proof}

\subsection{Sums of independent random variables with random standardisation}\label{sec4.2}


In Section \ref{sec4.1}, we considered the asymmetric Laplace approximation of a geometric random sum with deterministic standardisation. Similarly, we may arrive at an asymmetric Laplace limit if we apply a suitable random standardisation to a deterministic sum of independent random variables. More specifically, let $X_1,\ldots,X_n$ be independent random variables with zero mean and variance $\sigma^2\in(0,\infty)$. For $n\geq2$, let $B_{n-1}\sim \mathrm{Beta}(1,n-1)$ (with density $f_{B_n}(x)=n(1-x)^{n-1}$, $0<x<1$) be independent of the $X_i$'s.  Then 
\begin{equation*}\label{ggg}
	\sqrt{B_{n-1}}\bigg(\sum_{i=1}^{n}X_i+a\sqrt{B_{n-1}}\bigg)\rightarrow_d\mathrm{AL}(0,a,\sigma),\quad n\rightarrow\infty.
\end{equation*}
This convergence in distribution result is proved in the case $a=0$ in Proposition 2.2.12 of \cite{kkp01}. Moreover, $O(n^{-1/2})$ Kolmogorov and Wasserstein distance bounds and $O(n^{-1})$ bounds in the $d_{1,2}$ metric under a vanishing third moment assumption in this $a=0$ case were given in Theorem 1.3 of \cite{g21}. In the following theorem, we apply these bounds to derive explicit bounds to quantify the convergence in distribution result (\ref{ggg}) for general $a\in\mathbb{R}$.

\begin{theorem}\label{thm4.3} For $n\geq 2$, let $X_1,\ldots,X_n$ be independent random variables such that $\mathbb{E}[X_i]=0$, $\mathrm{Var}(X_i)=\sigma^2\in(0,\infty)$ and $\mathbb{E}[|X_i|^3]<\infty$, for $i=1,\ldots,n$. For $a\in\mathbb{R}$, let \[W=\sqrt{B_{n-1}}\left(\sum_{i=1}^{n}X_i+a\sqrt{B_{n-1}}\right),\] where $B_{n-1}\sim\mathrm{Beta}(1,n-1)$, and also let $Z\sim\mathrm{AL}(0,a,\sigma)$.

\vspace{3mm}

\noindent (i) Under the above assumptions,
	\begin{align}
		d_\mathrm{K}(W,Z)&\leq \frac{0.56}{\sigma^3n^{3/2}}\sum_{i=1}^{n}\mathbb{E}[|X_i|^3]+\frac{5}{n}.
        \label{5.11}
        \end{align}
\noindent (ii) Under the above assumptions,
        \begin{align}
		d_\mathrm{W}(W,Z)&\leq \frac{2\sqrt{2}}{3\sigma^2 n^{3/2}}\sum_{i=1}^{n}\mathbb{E}[|X_i|^3]+\frac{9.168\sigma}{n}+\frac{2|a|}{n+1}.\label{5.12}
	\end{align}
\noindent (iii) Suppose further that $\mathbb{E}[X_i^3]=0$ and $\mathbb{E}[X_i^4]<\infty$, for $1\leq i\leq n$. Then
	\begin{align}
		d_{1,2}(W,Z)\leq\frac{\sigma^2}{3n^2}\sum_{i=1}^{n}\bigg(3+\frac{\mathbb{E}[X_i^4]}{\sigma^4}\bigg)+\frac{9.168\sigma}{n}+\frac{2|a|}{n+1}.\label{5.13}
	\end{align}
\end{theorem}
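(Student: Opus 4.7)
The plan is to reduce the three bounds to the corresponding bounds for the symmetric ($a=0$) case from \cite[Theorem 1.3]{g21}, with an additional term capturing the effect of the asymmetry parameter. Using the representation \eqref{nvm}, write $Z = aX + \sigma\sqrt{X}N_b$ with $X \sim \mathrm{Exp}(1)$, $N_b \sim \mathrm{N}(0,1)$ independent, and set $Z_0 := \sigma\sqrt{X}N_b$, which has the symmetric Laplace distribution $\mathrm{AL}(0,0,\sigma)$, so that $Z = aX + Z_0$. The random variable $W$ admits a parallel decomposition into its mean-zero centered part $V := \sqrt{B_{n-1}}\sum_{i=1}^n X_i$ and the asymmetric shift involving $a$.

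For the Wasserstein bound \eqref{5.12}, construct a joint probability space carrying the original $V$ and $B_{n-1}$, an exponential $X$ comonotonically coupled with $nB_{n-1}$, and an independent standard normal $N_b$, and set $Z := aX + \sigma\sqrt{X}N_b$. A triangle inequality then yields
\begin{equation*}
\mathbb{E}|W - Z| \leq \mathbb{E}|V - Z_0| + |a|\,\mathbb{E}|nB_{n-1} - X|,
\end{equation*}
where $\mathbb{E}|V - Z_0|$ is controlled by the Wasserstein bound of \cite[Theorem 1.3]{g21} and $\mathbb{E}|nB_{n-1} - X| = d_\mathrm{W}(nB_{n-1}, X)$ is computed directly from the quantile coupling, yielding the constant $\tfrac{2}{n+1}$. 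The $d_{1,2}$ bound \eqref{5.13} follows analogously: the Lipschitz condition $\|h'\| \leq 1$ for $h \in \mathcal{H}_{1,2}$ permits the same additive control of the asymmetric-part error, while \cite[Theorem 1.3]{g21} provides the $d_{1,2}$ estimate for $V$ versus $Z_0$.

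For the Kolmogorov bound \eqref{5.11}, the scale invariance of $d_\mathrm{K}$ and a conditioning argument eliminate the explicit dependence on $|a|$: the split $d_\mathrm{K}(W, Z) \leq d_\mathrm{K}(V, Z_0) + d_\mathrm{K}(nB_{n-1}, X)$ controls the first term via \cite[Theorem 1.3]{g21} and the second by direct estimation of $\sup_t|F_{nB_{n-1}}(t) - F_X(t)|$, which is of order $1/n$; both contributions combine into the constant $5/n$.

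The principal technical obstacle is ensuring compatibility between the coupling that achieves \cite{g21}'s bound on $\mathbb{E}|V - Z_0|$ and the comonotonic coupling of $(nB_{n-1}, X)$, since $V$ and $nB_{n-1}$ share the underlying $B_{n-1}$. Resolving this compatibility either requires a careful disintegration argument producing a joint distribution that respects both couplings, or a direct inspection of \cite{g21}'s internal coupling construction, which already embeds a suitable coupling of $B_{n-1}$ with an exponential random variable that can be recycled here.
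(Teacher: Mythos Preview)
Your strategy coincides with the paper's: write $W=W_1+W_2$ and $Z=Z_1+Z_2$ with $W_1=\sqrt{B_{n-1}}\sum_i X_i$, $W_2=anB_{n-1}$, $Z_1\sim\mathrm{AL}(0,0,\sigma)$ and $Z_2=aE$ for $E\sim\mathrm{Exp}(1)$; then invoke \cite[Theorem~1.3]{g21} for the symmetric piece $d(W_1,Z_1)$ and bound $d(W_2,Z_2)$ separately. The only substantive difference is in that second step: you propose direct computation of the CDF difference (for $d_\mathrm{K}$) and the comonotonic/quantile coupling (for $d_\mathrm{W}$), whereas the paper obtains both $d_\mathrm{K}(nB_{n-1},E)\leq 2/n$ and $d_\mathrm{W}(nB_{n-1},E)\leq 2/(n+1)$ by comparing the beta and exponential Stein operators. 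The two routes give the same constants.

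On your ``principal technical obstacle'': the paper does not pass through an explicit joint coupling at all. It works directly at the metric level, writing
\[
d(W_1+W_2,\,Z_1+Z_2)\leq d(W_1,Z_1)+d(W_2,Z_2)
\]
for each of $d_\mathrm{K}$, $d_\mathrm{W}$ and $d_{1,2}$ (the last using $d_{1,2}\leq d_\mathrm{W}$ on the second summand), and then plugs in the \cite{g21} bound and the Stein-comparison bound. So the disintegration argument or the recycling of \cite{g21}'s internal coupling that you anticipate is not part of the paper's route; if you follow the paper you bypass that obstacle entirely rather than resolving it.
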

\begin{proof} (i)
	Let $U_n=\sqrt{nB_{n-1}}$ and $V_n=n^{-1/2}\sum_{i=1}^{n}X_i$. Also, let $U$ be a Rayleigh random variable with density $f_U(u)=2u\mathrm{e}^{-u^2}$, $u>0$, and let $V\sim \mathrm{N}(0,\sigma^2)$. Then $W=_dU_n(V_n+aU_n)=U_nV_n+aU_n^2$, and $Z=_dUV+aU^2$ by the normal variance-mean mixture representation (\ref{nvm}) of the asymmetric Laplace distribution. 
	We write $W_1=U_nV_n$, $Z_1=UV$, $W_2=aU_n^2$ and $Z_2=aU^2$, so that 
	\begin{align}
		d_\mathrm{K}(W,Z)=d_\mathrm{K}(W_1+W_2,Z_1+Z_2)\leq d_\mathrm{K}(W_1,Z_1)+d_\mathrm{K}(W_2,Z_2).\label{kb0}
	\end{align}
	We now bound $d_\mathrm{K}(W_2, Z_2)$. By the scale invariance of the Kolmogorov distance we have that $d_\mathrm{K}(W_2, Z_2)=d_\mathrm{K}(aU_n^2,aU^2)=d_\mathrm{K}(U_n^2,U^2)=d_\mathrm{K}(B,E)$, where $B=_d nB_{n-1}$ and $E\sim \mathrm{Exp}(1)$ and we used the standard distributional relation that $U^2=_d E$. 
    To bound $d_\mathrm{K}(B,E)$ we will use the comparison of Stein operators approach with the following Stein operators for the random variables $B$ and $E$:
	\begin{align}
		\mathcal{A}_{B}f(x)&=x(1-x/n)f'(x)+(1-x)f(x),\label{bop}\\
		\mathcal{A}_{E}f(x)&=xf'(x)+(1-x)f(x).\label{eop}
	\end{align}
	The Stein operator (\ref{bop}) for $B$ is obtained by applying a straightforward rescaling to the Stein operator of \cite{d15,gr13} for the beta distribution, and the Stein operator (\ref{eop}) for the $\mathrm{Exp}(1)$ distribution can be found in \cite{chatterjee}. 
	Let $h\in\mathcal{H}_\mathrm{K}$ and let $f_h$ denote the solution of the $\mathrm{Exp}(1)$ Stein equation $\mathcal{A}_Ef(x)=h(x)-\mathbb{E}[h(E)]$. We then have that 
	\begin{align}
		d_\mathrm{K}(W_2, Z_2)=d_\mathrm{K}(B,E)&=\sup_{h\in\mathcal{H}_\mathrm{K}}|\mathbb{E}[\mathcal{A}_Ef_h(B)]|\nonumber\\
		&=\sup_{h\in\mathcal{H}_\mathrm{K}}|\mathbb{E}[\mathcal{A}_Ef_h(B)]-\mathbb{E}[\mathcal{A}_Bf_h(B)]|\nonumber\\
		&=\sup_{h\in\mathcal{H}_\mathrm{K}}|\mathbb{E}[(B^2/n)f_h'(B)]|\nonumber\\
		&\leq\frac{1}{n}\|Bf_h'(B)\|\mathbb{E}[B]\leq\frac{2}{n}\|h\|\mathbb{E}[B]=\frac{2}{n},\label{kb2}
	\end{align}
	where in the penultimate step we used the bound $\|xf_h'(x)\|\leq2\|h-\mathbb{E}[h(E)]\|\leq 2$ (see inequality (2.5) of \cite{gaunt chi square}). Combining the bounds (\ref{kb0}) and (\ref{kb2}) and a simplification of the bound given in Theorem 1.3 of \cite{g21} for $d_\mathrm{K}(W_1,Z_1)$ (the simplification follows from employing the basic bound $(1-2/n)^{n-2}\leq1$, for $n\geq2$) yields inequality (\ref{5.11}). 
    
    \vspace{3mm} 
    
    \noindent (ii) We proceed similarly to part (i) of the proof. We note that the analogue of inequality (\ref{kb0}) is given by
	\begin{align}
		d_\mathrm{W}(W,Z)\leq d_\mathrm{W}(W_1,Z_1)+d_\mathrm{W}(W_2,Z_2).\label{Wte}
    \end{align}
    For the term $d_\mathrm{W}(W_2,Z_2)$, we first note that $d_\mathrm{W}(W_2,Z_2)=d_\mathrm{W}(aU_n^2,aU^2)=|a|d_\mathrm{W}(U_n^2,U^2)=|a|d_\mathrm{W}(B,E)$.
Proceeding similarly to in part (i) but instead taking $h\in\mathcal{H}_\mathrm{W}$ we have that
	\begin{align}
		d_\mathrm{W}(W_2,Z_2)=|a|d_\mathrm{W}(B,E)&
		\leq |a|\sup_{h\in\mathcal{H}_\mathrm{W}}\mathbb{E}[|(B^2/n)f_h'(B)|]\nonumber\\
		&\leq\frac{|a|}{n}\|f_h'\|\mathbb{E}[B^2]\leq\frac{|a|}{n}\mathbb{E}[B^2]=\frac{2|a|}{n+1},\label{Wb2}
	\end{align}
	where in the penultimate step we used the bound $\|f_h'\|\leq\|h'\|\leq1$ (see Remark 3.18 of \cite{d15}). By combining inequality (\ref{Wte}) with inequality (\ref{Wb2}) and an improvement of the bound given in Theorem 1.3 of \cite{g21} for $d_\mathrm{W}(W_1,Z_1)$ we obtain inequality (\ref{5.12}). The improved bound for $d_\mathrm{W}(W_1,Z_1)$ is obtained by upgrading inequality (4.5) of \cite{g21}, which is used in the derivation of the bound for $d_\mathrm{W}(W_1,Z_1)$, by applying the bound of Corollary 4.2 of \cite{chen} rather than the bound of Theorem 2.1 of \cite{rcoupling}.

\vspace{3mm} 
    
    \noindent (iii) Since
$d_{1,2}(W_2,Z_2)\leq d_\mathrm{W}(W_2,Z_2)$ we have the bound
	\begin{align}
		d_{1,2}(W,Z)\leq d_{1,2}(W_1,Z_1)+d_\mathrm{W}(W_2,Z_2).\label{12te}
	\end{align}
By combining (\ref{12te}) with (\ref{Wb2}) and the bound given in Theorem 1.3 of \cite{g21} for the term $d_{1,2}(W_1,Z_1)$ we obtain the desired bound (\ref{5.13}).
\end{proof}

\subsection*{Acknowledgements}
RG is funded in part by EPSRC grant EP/Y008650/1 and EPSRC grant UKRI068. HS is funded by EPSRC grant EP/Y008650/1.

\footnotesize

\end{document}